\numberwithin{equation}{section}
\newtheorem{thm}{Theorem}[section]
\newtheorem{lm}[thm]{Lemma}
\newtheorem{cor}[thm]{Corollary}
\theoremstyle{definition}
\newtheorem{df}[thm]{Definition}
\newtheorem{remark}[thm]{Remark}
\newtheorem{hyp}[thm]{Hypothesis}
\newtheorem{notn}[thm]{Notation}
\def\G{\mathrm{G}}
\def\Bset{\mathfrak S}
\def\Kset{\mathfrak S_K}
\def\tlG{\tilde\GF}
\def\hatG{\hat\GF}
\def\H{\mathrm{H}}
\def\GF{G}
\def\HF{H}
\def\MF{M}
\def\TF{T}
\def\gF{\mathfrak g}
\def\hF{\mathfrak h}
\def\mF{\mathfrak m}
\def\tF{\mathfrak t}
\DeclareMathOperator\Gal{Gal}
\DeclareMathOperator\Z{Z}
\DeclareMathOperator{\Hom}{Hom}
\DeclareMathOperator{\val}{val}
\def\M{\mathrm{M}}
\def\T{\mathrm{T}}
\DeclareMathOperator*{\Ad}{Ad}
\def\QQ{\mathbb{Q}}
\def\CC{\mathbb{C}}
\def\RR{\mathbb{R}}
\def\mcB{\mathcal B}
\def\mcO{\mathcal O}
\def\mcU{\mathcal U}
\def\mcZ{\mathcal Z}
\def\mcL{\mathcal L}
\def\mcE{\mathcal E}
\def\Kclass{\overline{\mathfrak s}}
\def\Bclass{\overline{\mathcal S}}
\def\oneK{\mathfrak s}
\def\oneB{\mathcal S}
\def\mO{\mathfrak O}
\def\charfn{\mathbbm{1}}
\DeclareMathOperator*{\Lie}{Lie}
\newcommand\set{\@ifnextchar*\@shortset\@tallset}
\def\@shortset*#1#2{\{#1 \mathrel| #2\}}
\newcommand\@tallset[2]{
\left\{
\left.
	#1\vphantom{#2\bigl(\bigr)}\,
\right|
	\,#2
\right\}
}
\definecolor{red1}{rgb}{0.9, 0, 0}
\definecolor{red2}{rgb}{0.7, 0.3, 0.3}
\definecolor{blue1}{rgb}{0, 0, 1}
\definecolor{blue2}{rgb}{0.3, 0.3, 0.9}
\definecolor{green1}{rgb}{0, 0.2, 0}
\definecolor{green2}{rgb}{0, 0.4, 0}
\DeclareMathOperator\Ind{Ind}
\renewcommand\inf{\operatorname{inf}}
\renewcommand\P{\mathrm P}
\newcommand\res{\mathclose\rvert}
\DeclareMathOperator\vol{vol}
\newcommand\ceq{\mathrel{:=}}
\begin{document}

\title{The Bernstein projector determined by a weak associate class
of good cosets}

\author{Yeansu Kim}
\author{Loren Spice}
\author{Sandeep Varma}

\thanks{L.S.\ is supported in part by Simons grant 636151.}

\maketitle

\begin{abstract}
Let $\G$ be a reductive group over a $p$-adic field $F$ of characteristic
zero, with $p \gg 0$. In \cite{Kim04}, J.-L.~Kim studied an equivalence
relation called weak associativity
on the set of unrefined minimal $K$-types for $\G$ in the sense
of A.~Moy and G.~Prasad.
Following \cite{Kim04},
we attach to the set \(\Kclass\) of good \(K\)-types in a
weak associate class of positive-depth unrefined minimal
$K$-types a $\GF$-invariant open
and closed subset $\gF_{\Kclass}$ of the Lie algebra $\gF$ of
$\GF = \G(F)$, and a subset $\tlG_{\Kclass}$
of the admissible dual \(\tlG\) of \(\GF\)
consisting of those
representations containing an unrefined minimal $K$-type
that belongs to $\Kclass$.
Then \(\tlG_{\Kclass}\) is the union of finitely many
Bernstein components for $\G$, so that we can consider the
Bernstein projector $E_{\Kclass}$ that it determines.
We show that $E_{\Kclass}$ vanishes outside the Moy--Prasad $\GF$-domain
$\GF_r \subset \GF$, and reformulate a result of Kim
as saying that the restriction of $E_{\Kclass}$ to $\GF_r$, pushed
forward via the logarithm to the Moy--Prasad $\GF$-domain
$\gF_r \subset \gF$, agrees on $\gF_r$ with the inverse
Fourier transform of the characteristic function of $\gF_{\Kclass}$.
This is a variant of one of the descriptions given by
R.~Bezrukavnikov, D.~Kazhdan and Y.~Varshavsky
in \cite{BKV16} for the depth-$r$
Bernstein projector.
\end{abstract}

\section{Introduction}

Let $\G$ be a connected reductive group over a $p$-adic field $F$
of characteristic zero,
and put \(\GF = \G(F)\).
We use similar notation for other groups, writing, for example,
\(\HF = \H(F)\) (once we define \(\H\)).
Let $\tlG$ (respectively, $\hatG$) denote the set of isomorphism classes
of irreducible admissible (respectively, irreducible tempered) representations of $\GF$.
Throughout, we will use Fraktur letters to denote
the rational points of Lie algebras --- $\gF = \Lie(\G)(F)$, etc.
The terms undefined in the introduction are either standard
and can be found in the references we cite, or
are reviewed in Section \ref{sec: notn preliminaries}.

Recall that the set of $\GF$-conjugacy classes of
pairs $(\M, \sigma)$ consisting of a Levi subgroup $\M$ of
$\G$ and a (not necessarily unitary)
irreducible supercuspidal representation $\sigma$ of $\MF$, is (the
set of $\CC$-points of) a complex variety $\Omega(\G)$, called the
Bernstein variety of $\G$. The Zariski-connected components
of $\Omega(\G)$ are also its Hausdorff-connected components,
and by a Bernstein component of $\G$, we will refer
to the preimage of such a component in $\tlG$
under the infinitesimal character map
$\inf \colon \tlG \to \Omega(\G)$ that
sends any $\pi \in \tlG$ to the well defined
$\GF$-conjugacy class of its
supercuspidal support. We refer to \cite[Section 1.3]{BKV16}
for a review of general facts concerning
the Bernstein center $\mcZ(\G)$ of $\G$;
in Remark \ref{rmk: mcZ(G) review} we will further briefly review
a few that are particularly important for us.

Recall from \cite[Section 3.4]{MP96} the
notion of an (positive-depth) unrefined minimal $K$-type for $\G$.
If \(r > 0\) and \(x \in \mcB(\G)\), and
if \(\chi\) is a character of \(\GF_{x, r}\) that is trivial on \(\GF_{x, r+}\),
then combining Pontrjagin duality with
a Moy--Prasad isomorphism
\(\GF_{x, r}/\GF_{x, r+} \cong \gF_{x, r}/\gF_{x, r+}\)
considered in \cite[Sections 3.7 and 3.8]{MP94}
lets us attach to $\chi$
a certain subset
$Y + \gF^*_{x, (-r)+} \subset \gF^*$,
where $\gF^*$ is the $F$-vector
space dual to $\gF$.
Following \cite[Definition 2.1.2]{Kim04}, we will call
this subset $Y + \gF^*_{x, (-r)+}$ the dual
blob of the pair $\oneK = (\GF_{x, r}, \chi)$;
and we call \(\oneK\) an unrefined minimal \(K\)-type if its dual blob contains no nilpotent elements.
Further, an (positive-depth) unrefined
minimal $K$-type of the form $(\GF_{y, s}, \xi)$
is called associate to \(\oneK\)
if the dual blobs of $(\GF_{x, r}, \chi)$ and $(\GF_{y, s}, \xi)$
have \(\GF\)-conjugates that intersect non-trivially
\cite[Section 3.4]{MP96},
in which case automatically \(r = s\)
\cite[Corollary 3.2.2]{AD02}.
Following \cite[Definition 2.2.1]{Kim04}, let `weak associativity'
be the transitive closure of the relation of associativity
on the set of positive-depth, unrefined minimal $K$-types for $\G$.

We warn the reader that, in
the body of this paper, we work with a
notion of weak associativity
that (slightly and, under our hypotheses, harmlessly) differs from this one.
Namely, we give a definition that turns out to be the restriction to good unrefined minimal \(K\)-types, in the sense of \cite[Definition 3.2.1]{Kim04}, of the definition used in the introduction.
See Subsubsection \ref{subsubsec: minimal K type weak associativity}.
Accordingly, we write \(\Kclass\) for the set of good unrefined minimal \(K\)-types that are weakly associate to \(\oneK\),
so that the body of the paper will call \(\Kclass\) a weak associate class.

Write $\tlG_\oneK$ for the subset of $\tlG$
formed of (the isomorphism classes of) all the irreducible admissible
representations of $\GF$ containing a (not necessarily good) weak associate
of $\oneK$,
and \(\tlG_{\Kclass}\) for the analogous subset of representations containing an element of \(\Kclass\).
Remark \ref{rmk: GS GmcS} uses \cite[Theorem 4.5.1]{Kim04} to show that these sets are equal.
Lemma \ref{lm: types are types} shows that
\(\tlG_\oneK = \tlG_{\Kclass}\) is a union of Bernstein components of $\G$,
so one can consider the element
$E_\oneK = E_{\Kclass} \in \mcZ(\G)$ that acts as the identity on
elements of $\tlG_{\Kclass}$, and as zero on the elements
of $\tlG \setminus \tlG_{\Kclass}$.

In \cite{BKV16}, for a nonnegative real number $s$,
R.~Bezrukavnikov, D.~Kazhdan and
Y.~Varshavsky study the depth-$s$ Bernstein projector
for $\G$.  This is the element $E_s \in \mcZ(\G)$
that acts as the identity on every representation
$\pi \in \tlG$ of depth at most $s$, and as zero
on every representation $\pi \in \tlG$ of
depth greater than $s$. They
give two descriptions for $E_s$. The
first is an Euler--Poincar\'e formula for $E_s$
\cite[Theorem 1.6]{BKV16}.
The second involves
the Lie algebra $\gF$: they show
\cite[Corollary 1.22]{BKV16}
that
$E_s$ is supported on the Moy--Prasad $\GF$-domain $\GF_{s+}$, and that,
if there is an `$s$-logarithm' map $\mcL \colon \GF_{s+} \to \gF_{s+}$,
then the push-forward $\mcL_!(E_{s+})$ of $E_{s+}$,
which is a distribution on $\gF$ supported in $\gF_{s+}$, equals the
(suitably normalized) inverse Fourier transform of the
characteristic function $\charfn_{\gF^*_{-s}}$ of the
Moy--Prasad $\GF$-domain $\gF^*_{-s} \subset \gF^*$.

Thus, it is natural to ask if one could refine the above description
of $E_s$ into a description of $E_{\Kclass}$.
We do not give an Euler--Poincar\'e description of \(E_{\Kclass}\).
However,
under Hypotheses \ref{hyp: Kim Spi}, \ref{hyp: Spi 1}, and \ref{hyp: Spi 2},
we do describe $E_{\Kclass}$
in terms of an inverse Fourier transform of a characteristic function
$\charfn_{\gF_{\Kclass}}$, where $\gF_{\Kclass} \subset \gF$ is a subset
defined by Ju-Lee Kim; see Subsubsection \ref{sssec: weak associativity remarks}
for this notation.
(There appears to be a difference here, since \cite{BKV16} deals with a subset of \(\gF^*\) but we deal with a subset of \(\gF\); but our Hypothesis \ref{hyp: Kim Spi} affords an identification of \(\gF\) with \(\gF^*\), so really this is just a cosmetic difference.)

In Subsection \ref{subsec: cosets dual blobs}, we impose Hypothesis \ref{hyp: Kim Spi}\ref{hyp: Kim 1.3.6}, which, among other things, guarantees that, for every \(s > 0\), the logarithm map is a \(\GF\)-equivariant, \(p\)-adic analytic isomorphism \(\log \colon \GF_s \to \gF_s\).
Thus, we can speak
of the push-forward $\log_!(E_{\Kclass}\res_{\GF_r})$ of $E_{\Kclass}$ along
$\log$ as an invariant distribution on the $\GF$-domain
$\gF_r \subset \gF$.

Our main result is a description of the projectors $E_{\Kclass}$.
\begin{thm} \label{thm: main}
Let $\Kclass$ be the set of good unrefined minimal
$K$-types in a weak associate class of unrefined
minimal $K$-types of positive depth $r > 0$,
and let $\Gamma \in - \gF_{\Kclass}$ be a good semisimple element.
Assume Hypothesis \ref{hyp: Kim Spi} for the positive
real number $r$, as well
as Hypotheses \ref{hyp: Spi 1} and \ref{hyp: Spi 2} for
$\Kclass$ and $\Gamma \in - \gF_{\Kclass}$. Then:
\begin{enumerate}[label=(\roman*)]
\item\label{thm: E support} $E_{\Kclass}$ is supported on $\GF_r$.
\item\label{thm: log E} The transfer $\log_!(E_{\Kclass}\res_{\GF_r})$
of $E_{\Kclass} \res_{\GF_r}$ to $\gF_r$
coincides with the restriction to $\gF_r$ of the
inverse Fourier transform
of $\charfn_{\gF_{\Kclass}}$, where the Fourier transform is normalized
as in Subsection \ref{subsec: Fourier transform definition}.
\end{enumerate}
\end{thm}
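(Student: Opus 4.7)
My approach reduces the theorem, using the hypotheses that guarantee a well-behaved logarithm and a self-duality for \(\gF\), to a reformulation of Kim's character-theoretic description of \(E_{\Kclass}\res_{\GF_r}\) matched against the Fourier transform of \(\charfn_{\gF_{\Kclass}}\).

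For part~\ref{thm: E support}, the plan is to express \(E_{\Kclass}\) as a \(\GF\)-invariant average of matrix coefficients of the types \((\GF_{x,r}, \chi) \in \Kclass\). Each such matrix coefficient is supported in the Moy--Prasad subgroup \(\GF_{x,r} \subset \GF_r\), and the union \(\bigcup_{x\in\mcB(\G)} \GF_{x,r} = \GF_r\) is \(\GF\)-invariant, so the resulting distribution is supported in \(\GF_r\). An alternative route through the identity \(E_{\Kclass}\cdot E_{\leq r} = E_{\Kclass}\) combined with BKV's depth-support result \cite[Corollary 1.22]{BKV16} should also work, once one handles the interaction of Bernstein-center multiplication and distributional support.

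For part~\ref{thm: log E}, the plan is to write down Kim's explicit description of \(E_{\Kclass}\res_{\GF_r}\) in terms of the good \(K\)-types \((\GF_{x,r},\chi) \in \Kclass\) and directly match it to the inverse Fourier transform. Kim's work should present \(E_{\Kclass}\res_{\GF_r}\) (up to a normalization depending on measure choices) as a locally constant \(\GF\)-invariant function built from the characters \(\chi\), extended by zero off the \(\GF\)-translates of \(\GF_{x,r}\). Pushing this forward via \(\log \colon \GF_r \to \gF_r\), the Moy--Prasad isomorphism and the identification \(\gF \cong \gF^*\) afforded by Hypothesis~\ref{hyp: Kim Spi} turn each \(\chi\) into an additive character \(Z\mapsto\psi(\langle Z, Y\rangle)\) on \(\gF_{x,r}/\gF_{x,r+}\), for \(Y\) representing the dual blob of \((\GF_{x,r},\chi)\). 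Separately, I compute the inverse Fourier transform of \(\charfn_{\gF_{\Kclass}}\) at \(Z \in \gF_r\) by partitioning \(\gF_{\Kclass}\) into \(\GF\)-translates of dual-blob cosets \(Y + \gF_{x,(-r)+}\); the integral of \(\psi(\langle Z,\cdot\rangle)\) over each such coset is \(\psi(\langle Z, Y\rangle)\charfn_{\gF_{x,r}}(Z)\) up to a measure factor, whose \(\GF\)-invariant sum over \(\Kclass\) matches the character description above.

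I expect the main obstacle to be the careful bookkeeping of normalizations: of Haar measures on \(\GF\), \(\gF\), and the various Moy--Prasad quotients; of the self-duality \(\gF\cong\gF^*\) from Hypothesis~\ref{hyp: Kim Spi}; and of the Fourier transform convention fixed in Subsection~\ref{subsec: Fourier transform definition}. The role of Hypotheses~\ref{hyp: Spi 1} and~\ref{hyp: Spi 2} and of the existence of the good semisimple \(\Gamma \in -\gF_{\Kclass}\) will be to provide the geometric control showing that the partition of \(\gF_{\Kclass}\) into dual-blob cosets is essentially disjoint modulo the \(\GF\)-action, so the character sum matches Kim's averaged sum over \(\Kclass\) without overcounting.
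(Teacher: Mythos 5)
Your proposal has genuine gaps in both parts, and in particular it does not engage with the actual role of Hypotheses \ref{hyp: Spi 1} and \ref{hyp: Spi 2} and the good element \(\Gamma\). For part \ref{thm: E support}, writing \(E_{\Kclass}\) as a \(\GF\)-invariant average of matrix coefficients of the types in \(\Kclass\) presupposes exactly the kind of explicit Euler--Poincar\'e formula that the paper says it does \emph{not} provide (that is the content of the separate work of Moy--Savin); the Bernstein projector is defined spectrally, and there is no a priori identity expressing it as a combination of the idempotents attached to the \((\GF_{x,r},\chi)\). The fallback via \(E_{\Kclass}\cdot E_r=E_{\Kclass}\) also fails: multiplication in the Bernstein center is convolution of invariant distributions, and convolution can only spread support, so this identity gives no bound on \(\operatorname{supp}(E_{\Kclass})\) in terms of \(\operatorname{supp}(E_r)\) (and would in any case aim at the wrong domain, \(\GF_{r+}\) rather than \(\GF_r\)). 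The paper's proof of \ref{thm: E support} is a different and essentially local-harmonic-analytic argument: it covers \(\GF\setminus\GF_r\) by the invariant open sets \(\mcU_\gamma\) of Hypothesis \ref{hyp: Spi 1}, shows via Hypothesis \ref{hyp: Spi 2} and the Plancherel formula that \(E_{\Kclass}\res_{\mcU_\gamma}\) is a finite linear combination of characters \(\Theta_{\pi^\vee}\) and hence has a \(\Gamma\)-asymptotic expansion about \(\gamma\), and then kills that expansion by testing against the functions \(\ell_\gamma(\Lambda_{-X,x,r})\), using semisimple descent and the identity \(E_{\Kclass}*\Lambda_{\Gamma+X',x,r}=\Lambda_{\Gamma+X',x,r}\) of Lemma \ref{lm: non good types} together with the fact that \(\gamma\notin\GF_{x,r}\). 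None of this appears in your outline.

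For part \ref{thm: log E}, your plan to compute the inverse Fourier transform of \(\charfn_{\gF_{\Kclass}}\) by partitioning \(\gF_{\Kclass}\) into \(\GF\)-translates of dual-blob cosets and integrating term by term is not justified as stated: \(\gF_{\Kclass}\) is not compact, the family of translates is infinite, and \(\widehat{\charfn_{\gF_{\Kclass}}}\) is only a distribution, so the interchange of summation and integration and the matching against a putative explicit function formula for \(E_{\Kclass}\res_{\GF_r}\) both require justification you do not supply. The paper instead proves \ref{thm: log E} by pairing both sides against test functions \(f\in C_c^\infty(\gF_r)\): Kim's Theorem 3.3.1 gives
\[
\int_{\gF_{\Kclass^\vee}}\hat f(X)\,dX=\int_{\hatG_{\Kclass^\vee}}\Theta_\pi(f\circ\log)\,d\pi,
\]
the left-hand side is \(\mcE_{\Kclass}(f)\) once one checks that Kim's measure is the self-dual one, and the right-hand side is \(E_{\Kclass}(f\circ\log)\) by the Plancherel formula and the spectral description of the Bernstein center. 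If you want to avoid citing Kim's theorem you would have to reprove it, which is substantially harder than the bookkeeping you anticipate.
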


It turns out that the strategy for our proof of
Theorem \ref{thm: main}\ref{thm: E support} is a modification
of the strategy of the proof of \cite[Theorem 3.3.1]{Kim04}:
while that proof studies
certain invariant distributions using the character expansion
of \cite{KM03}, which is valid near the identity element,
our proof of Theorem \ref{thm: main}\ref{thm: E support} uses
\cite[Theorem 4.4.11]{Spi18}, which
gives a character expansion in the spirit of \cite{KM03}
about semisimple elements of $\GF$ other than the identity.

As we were finishing up our work, we noticed the recent
preprint \cite{MS20} by A.~Moy and G.~Savin,
which gives an Euler--Poincar\'e formula for
a Bernstein projector that is very closely related
to (possibly the same as or a bit finer than)
the one that we are considering. Their work does not
require the strong assumptions on $p$ that we are
imposing. However, apart from the fact that the description
that we offer is different from theirs,
our work also introduces the idea of using character expansions
to study Bernstein projectors, making our proof
different from the one
in \cite{MS20}, shedding light on these projectors
from a different angle.

Our work also offers a glimpse of
a possible version of Theorem \ref{thm: main}
where good unrefined minimal $K$-types are replaced by
the `slightly refined' minimal $K$-types
$(K_{\Sigma}^+, \phi_{\Sigma})$ from
\cite[Section 4.2.2]{KM06}, that were studied by
by J.-L.~Kim and F.~Murnaghan in \cite{KM06}
and \cite{Kim07}.  In fact, using \cite{KM06} in
place of \cite{KM03} and \cite[Equation (0.2)]{Kim07}
in place of \cite[Theorem 3.3.1]{Kim04}
should allow one to generalize Theorem \ref{thm: main}\ref{thm: log E}
to the setting of the $(K_{\Sigma}^+, \phi_{\Sigma})$,
provided one also proves the analogue of
Lemma \ref{lm: types are types} in this `slightly refined' situation.
Generalizing Theorem \ref{thm: main}\ref{thm: E support}
to the $(K_{\Sigma}^+, \phi_{\Sigma})$ would take more work,
since the results from \cite{Spi18} that we use haven't yet been
adapted to deal with the $(K_{\Sigma}^+, \phi_{\Sigma})$.
Such a generalization would be a nice way to think
of J.-L.~Kim's approach in \cite{Kim07} to the exhaustiveness
of J.-K.~Yu's construction of supercuspidal representations.
We hope to return to this question
in future work.

\section{Some notation and preliminaries} \label{sec: notn preliminaries}
We fix an algebraic closure $\bar F$ of $F$, and let
$\val \colon \bar F \to \QQ \cup \{\infty\}$ be
the usual extension to $\bar F$ of the normalized
discrete valuation on $F$. Throughout the rest of this paper,
Hypothesis \ref{hyp: Kim Spi}, which includes
\cite[Hypothesis 1.3.6]{Kim04}, will be in force.

\subsection{Bruhat--Tits buildings and Moy--Prasad filtrations}
\label{subsec: mcB embeddings}

Let $\mcB(\G)$ denote the enlarged Bruhat--Tits building of $\G$.
For $x \in \mcB(\G)$ and $r \geq 0$ (respectively, $r \in \mathbb{R}$),
we have the Moy--Prasad subgroups $\GF_{x, r}, \GF_{x, r+} \subset \GF$
(respectively, the Moy--Prasad lattices $\gF_{x, r}, \gF_{x, r+} \subset \gF$). For $r \geq 0$ (respectively, $r \in \mathbb{R}$),
we have Moy--Prasad $\GF$-domains $\GF_r, \GF_{r+} \subset \GF$
(respectively, $\gF_r, \gF_{r+} \subset \gF$) defined by:
\begin{equation*} \label{eqn: Moy-Prasad domains}
\GF_{r} \ceq \bigcup_{x \in \mcB(\G)} \GF_{x, r},\quad
\GF_{r+} \ceq \bigcup_{x \in \mcB(\G)} \GF_{x, r+},\quad
\gF_{r} \ceq \bigcup_{x \in \mcB(\G)} \gF_{x, r},\quad
\gF_{r+} \ceq \bigcup_{x \in \mcB(\G)} \gF_{x, r+}.
\end{equation*}
Obvious modifications of this notation will apply to other groups:
thus, if $\H$ is a reductive group, we will talk
of $\HF_r$, $\hF_{r+}$, etc.

Let \(\G'\) be a reductive subgroup of \(\G\) containing a maximal torus \(\T\) of \(\G\) that splits over a tame extension \(L\) of \(F\)
(which every torus in \(\G\) will, once we impose Hypothesis \ref{hyp: Kim Spi}\ref{hyp: Kim 1.3.6}).
Then \cite[Proposition 4.6]{AS08} constructs a family
of embeddings $\mcB(\G') \hookrightarrow \mcB(\G)$
of the Bruhat--Tits building of $\G'$ into
that of $\G$, with the property that for every such
embedding $\iota$, every $x \in \mcB(\G')$ and every $r > 0$,
we have
$\GF'_{x, r} = \GF' \cap \GF_{\iota(x), r}$.
For this subsection alone, we informally refer
to the elements of such a family of embeddings
$\mcB(\G') \hookrightarrow \mcB(\G)$ as
`canonical'.
In fact, the canonical embedding constructed by \cite[Proposition 4.6]{AS08}
is part of a larger collection
of analogous embeddings over discretely valued, separable extensions of \(F\), but these extra data will not concern us,
except that we use below the embedding \(\mcB(\G', L) \to \mcB(\G, L)\) corresponding to the splitting field \(L\) of \(\T\)
to show that our notion of canonical embedding agrees with that of \cite[Section 1.3]{Kim04}.
These embeddings are compatible in the sense that the precomposition of a canonical embedding \(\mcB(\G') \hookrightarrow \mcB(\G)\) with conjugation by an element \(g \in \GF\) is a canonical embedding \(\mcB(g^{-1}\G' g) \hookrightarrow \mcB(\G)\); and, if \(\H'\) is a reductive subgroup of \(\G'\) containing a maximal torus of $\G'$, hence also a reductive subgroup of \(\G\) containing a maximal torus of $\G$, then the composition of any of the canonical embeddings \(\mcB(\H') \hookrightarrow \mcB(\G')\) and any of the canonical embeddings \(\mcB(\G') \hookrightarrow \mcB(\G)\) is one of the canonical embeddings \(\mcB(\H') \hookrightarrow \mcB(\G)\).

Any two canonical embeddings differ
in a well understood way \cite[Remark 4.7]{AS08}.
In particular, they all have the same image, so we may, and do,
consider \(\mcB(\G')\) as a subset of \(\mcB(\G)\).

When $\G'$ is a twisted Levi subgroup of $\G$, these embeddings are also discussed by \cite[Section 1.3]{Kim04}, which references \cite[Remark 2.11]{Yu01}.  Since the explicit description of the embeddings in \cite[Remark 2.11]{Yu01} and \cite[Proposition 4.6]{AS08} work by tame descent from the splitting field of \(\T\), it suffices to work with \(\mcB(\G', L) \hookrightarrow \mcB(\G, L)\), and so assume that \(\T\) is actually a \emph{split} maximal torus.  Then the explicit apartment-by-apartment descriptions in the two references show that the notions of canonical embedding agree.

\subsection{An additive character
and the Fourier transform}
\label{subsec: Fourier transform definition}

For the remainder of the paper,
we fix a continuous character $\Lambda \colon F \to \CC^{\times}$
that is nontrivial on the ring $\mO$ of integers of $F$,
but trivial on the maximal ideal of \(\mO\).
Let \(V\) be any \(F\)-vector space.
Then our choice of \(\Lambda\) gives an isomorphism between the Pontrjagin dual \(\Hom_{\text{cts}}(V, \CC^\times)\) and the linear dual \(V^* = \Hom_F(V, F)\),
so further choosing a Haar measure \(dY\) on \(V\) defines
a Fourier transform
$C_c^{\infty}(V) \to C_c^{\infty}(V^*)$ by
$f \mapsto \hat f$, where
\[ \hat f(Y) = \int_V f(X) \Lambda(\langle Y, X\rangle) \, dX. \]
If $T$ is a distribution on \(V^*\)
(i.e., a linear map \(C_c^\infty(V^*) \to \CC\)),
then we define
its Fourier transform $\hat T \colon C_c^{\infty}(V) \to \CC$
to be the distribution given by $\hat T(f) = T(\hat f)$.

When we apply this definition, \(V = \hF\) will be the space of \(F\)-rational points of the Lie algebra of an algebraic \(F\)-group \(\H\) for which we can use Hypothesis \ref{hyp: Kim Spi}\ref{hyp: Kim 1.3.6} to identify \(\hF \cong \hF^*\), and so to regard the Fourier transform \(\hat f\) of a function \(f\) on \(\hF\) as again a function on \(\hF\), and hence to define the Fourier transform of a distribution on \(\hF\) as again a distribution on \(\hF\).

\subsection{Good cosets, weak associativity and a partition of $\gF$}
\label{subsec: good cosets etc.}

\subsubsection{Good elements}
Recall from \cite[Definition 1.2.1]{Kim04} that for
$s \in \RR$, a good
element of depth $s$ is a tame element $\Gamma \in \gF$
(i.e., an element that belongs to the Lie algebra of some tamely ramified maximal torus
--- which, once we impose Hypothesis \ref{hyp: Kim Spi}\ref{hyp: Kim 1.3.6},
will be any semisimple element)
such that, for some (equivalently, every)
maximal torus
$\T$ with $\Gamma \in \tF$, we have that $\Gamma \in \tF_{s} \setminus \tF_{s+}$,
and such that for every absolute root $\alpha$ of \(\T\) in \(\G\),
$d\alpha(\Gamma)$ is either zero or has valuation $s$.
(The latter condition \emph{almost} implies the former, in the sense that, if it is satisfied, then \(\Gamma\) does not belong to \(\tF_{s+}\) unless \(\Gamma\) is centralized by \(\G\), and \(\Gamma\) belongs to \(\tF_s\) if \(\G\) is adjoint.)

\subsubsection{Good cosets and their weak associativity}
\label{subsubsec: good cosets weak associativity}
Let $r > 0$. By a good coset of depth $-r$,
we mean a set of the form $\Gamma + \gF_{x, (-r)+}$,
where $\Gamma \in \gF$ is a good element of depth $-r$, and
$x \in \mcB(\G') \subset \mcB(\G)$, where $\G'$ is the
centralizer of $\Gamma$ in \(\G\). By \cite[Theorem 2.3.1]{KM03},
this agrees with the marginally
different formulation of the same notion in \cite[Definition 1.2.2]{Kim04}.

Two good cosets are said to be associate
to each other if they have \(\GF\)-conjugates that intersect non-trivially,
in which case their depths are the same \cite[Remark 2.4.2(1)]{KM03}.
The transitive closure of the relation of associativity
is an equivalence relation, called weak associativity.
(The definition in \cite[Definition 1.4.1]{Kim04}
is a priori coarser,
in the sense that it might call more good cosets weakly associate,
because the chain of associate non-degenerate cosets linking them is not required to consist only of good cosets.
Actually, this does not happen, thanks to
\cite[Lemma 1.4.2]{Kim04}, which implies that two
good cosets that are weakly associate in the more general sense
of \cite[Definition 1.4.1]{Kim04}
are associate to a common good coset, hence weakly associate in our sense.)

The notions of good coset,
and of weak associativity for good cosets,
are also defined in
\cite{Kim04} for $r = 0$, but we will avoid
referring to these notions.

\subsection{Unrefined minimal cosets and dual blobs}
\label{subsec: cosets dual blobs}

We now need to impose Hypothesis \ref{hyp: Kim Spi}, in order to speak of dual blobs.
This hypothesis depends on a positive real number \(r\), which we will take to be the depth of a weak associate class \(\Kclass\) of good unrefined minimal \(K\)-types in Subsection \ref{subsec: projector}.
We will later add Hypotheses \ref{hyp: Spi 1} and \ref{hyp: Spi 2}.

\begin{hyp} \label{hyp: Kim Spi}\hfill
\begin{enumerate}[label=(\roman*)]
\item\label{hyp: Kim 1.3.6}
The group \(\G\) satisfies \cite[Hypothesis 1.3.6]{Kim04}
(consisting of hypotheses labeled (HB), (HGT) and (H\(k\)) there),
where the reference to the adjoint representation in (H\(k\)) is replaced by a faithful representation of \(\G\), as in \cite[Section 3.1.0]{Kim99} and \cite[Appendix B]{DR09}.  This (together with the fact that there are finitely many rational conjugacy classes of maximal tori in \(\G\)) guarantees the existence of a finite, tame, Galois extension \(L_\G\) of \(F\) that splits every maximal torus in \(\G\),
and we require also that the analogue of \cite[(H\(k\))]{Kim04} is satisfied for the base change of \(\G\) to \(L_\G\), with respect to the same faithful representation defined over $F$.
\item\label{hyp: log}
For all $x$ belonging to the
Bruhat--Tits building $\mcB(\G)$ of $\G$,
the logarithm map with respect to our chosen faithful representation of \(\G\)
(which, from \ref{hyp: Kim 1.3.6},
restricts to an analytic (but not group-theoretic) isomorphism
$\G(L_\G)_{x, r} \to \Lie(\G)(L_\G)_{x, r}$)
carries cosets of \(\G(L_\G)_{x, r+}\) to cosets of \(\Lie(\G)(L_\G)_{x, r+}\), and the induced map
\[
\overline\log \colon \G(L_\G)_{x, r}/\G(L_\G)_{x, r+} \to
\Lie(\G)(L_\G)_{x, r}/\Lie(\G)(L_\G)_{x, r+}
\]
is an isomorphism of abelian groups.
\item\label{hyp: p big}
$p > e(L_{\G}/\QQ_p)/(p - 1)$, where $L_{\G}$ is as in
\ref{hyp: Kim 1.3.6} and $e(L_{\G}/\QQ_p)$ denotes its ramification degree
over $\QQ_p$.
\end{enumerate}
\end{hyp}

\begin{remark}\hfill
\label{rmk: hyp Kim Spi consequences}
\begin{itemize}
\item Hypothesis \ref{hyp: Kim Spi}
is satisfied whenever the residue characteristic $p$ is greater
than a constant that depends only on the absolute root
datum of $\G$ and the degree of ramification of $F$ over
$\QQ_p$.
To avoid a lengthy digression here, we do not go into details; but, for some conditions that imply parts of Hypothesis \ref{hyp: Kim Spi}, see \cite[Proposition 4.1]{AR00}, \cite[Corollary 2.6 and Theorem 3.3]{Fin19}, \cite[Proposition 3.1.1]{Kim99}, and \cite[Lemma B.5.4 and Lemma B.6.12]{DR09}
(noting that the proof of \cite[Lemma B.7.2]{DR09} also shows that, in the notation of that result, \(Z_r \times G'_r \to G_r\) is a bijection for all \(r > 0\)).
\item One way that we use Hypothesis \ref{hyp: Kim Spi}\ref{hyp: Kim 1.3.6} is to ensure the existence of Moy--Prasad isomorphisms, which do not exist in general.  See, for example, \cite[remark following Corollary 5.6]{Yu15}.
\item Hypothesis \ref{hyp: Kim Spi}\ref{hyp: Kim 1.3.6}
says that there exists a symmetric
nondegenerate $\Ad \GF$-invariant bilinear form $B$ on
$\gF$ such that the resulting isomorphism from $\gF$ to
its dual vector space $\gF^*$ identifies, for each
$x \in \mcB(\G)$ and each $r \in \RR$, the Moy--Prasad lattice
$\gF_{x, r}$ with
\[
\gF^*_{x, r} \ceq \set
	{Y \in \gF^*}
	{\text{\(\langle Y, \gF_{x, (-r)+} \rangle\)
		is contained in the maximal ideal of \(\mO\)}}.
\]
If \(\H\) is a reductive subgroup of \(\G\) containing a maximal torus in \(\G\)
(for example, \(\H = \G\)), then the restriction of \(B\) to \(\hF\) is non-degenerate, hence also furnishes an isomorphism \(\hF \cong \hF^*\).  This allows us to view the Fourier transform as a map \(C_c^\infty(\hF) \to C_c^\infty(\hF)\), and hence, in particular, to iterate it.  We will always equip \(\hF\) with the unique Haar measure, called self dual, for which \(\Hat{\Hat f}(Y) = f(-Y)\) for all \(f \in C_c^\infty(\hF)\) and all \(Y \in \hF\).
\item By \cite[Corollary 2.3]{Yu01} and the fact that \(\log\) is \(\Gal(L_\G/F)\)-equivariant, Hypothesis \ref{hyp: Kim Spi}\ref{hyp: log}, which is a statement about \(L_\G\)-rational points, implies the analogous statement about \(F\)-rational points.
\end{itemize}
\end{remark}

Compare Lemma \ref{lm: H domain} and Corollary \ref{cor: H domain} to \cite[Hypothesis 8.3]{AK07}.
Corollary \ref{cor: H domain} is a strictly stronger result, but, since its proof, which uses the exponential map, is different in spirit from that of Lemma \ref{lm: H domain}, we have separated them.

\begin{lm}
\label{lm: H domain}
If \(\H\) is a connected, reductive subgroup of \(\G\), then
\(\hF \cap \gF_s\) equals \(\hF_s\) for all real numbers \(s\),
and
\(\HF \cap \GF_s\) equals \(\HF_s\) for all \(s > 0\).
\end{lm}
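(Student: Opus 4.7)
The plan is to prove the Lie-algebra identity $\hF \cap \gF_s = \hF_s$ first, and then deduce the group identity $\HF \cap \GF_s = \HF_s$ for $s > 0$ via the logarithm map furnished by Hypothesis~\ref{hyp: Kim Spi}. For the Lie-algebra identity, the inclusion $\hF_s \subseteq \hF \cap \gF_s$ is immediate from the canonical embedding $\iota \colon \mcB(\H) \hookrightarrow \mcB(\G)$ of Subsection~\ref{subsec: mcB embeddings} and its evident Lie-algebra analogue, which gives $\hF_{y, s} = \hF \cap \gF_{\iota(y), s}$ for every $y \in \mcB(\H)$ and every real $s$.

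The reverse inclusion $\hF \cap \gF_s \subseteq \hF_s$ is the substantive content: given $X \in \hF \cap \gF_s$, we have $X \in \gF_{x, s}$ for some $x \in \mcB(\G)$, but $x$ need not lie in $\mcB(\H)$, so we must exhibit a possibly different point $y \in \mcB(\H)$ with $X \in \hF_{y, s}$. I would proceed by tame descent: passing to the Galois extension $L_\G$ of Hypothesis~\ref{hyp: Kim Spi}\ref{hyp: Kim 1.3.6} over which every maximal torus of $\G$ (and of $\H$) splits, and using that $\gF_s = \gF \cap \gF(L_\G)_s$ and $\hF_s = \hF \cap \hF(L_\G)_s$ (by the compatibility of Moy--Prasad filtrations with tame base change) reduces the problem to the split case. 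In the split case, fix a maximal torus $\T$ of $\H$ embedded in a maximal torus $\T_\G$ of $\G$; the apartment of $\T$ in $\mcB(\H)$ sits as an affine subspace of the apartment of $\T_\G$ in $\mcB(\G)$, and for $x$ in this common apartment the root-space decompositions of $\gF_{x, s}$ and $\hF_{x, s}$ make $\hF_{x, s} = \hF \cap \gF_{x, s}$ transparent. The remaining task is to deform the given $x \in \mcB(\G)$ into $\mcB(\H)$ while preserving $X \in \gF_{x, s}$, which I would accomplish using the Jordan decomposition $X = X_{\mathrm s} + X_{\mathrm n}$ in $\hF$ (compatible with the Jordan decomposition in $\gF$): $X_{\mathrm s}$ lies in the Lie algebra of a maximal torus of $\H$ and descends to $\hF_s$ by subtorus-filtration compatibility, while $X_{\mathrm n}$ descends via a Jacobson--Morozov-style argument available under our hypotheses on $p$.

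For the group identity, Hypothesis~\ref{hyp: Kim Spi}\ref{hyp: Kim 1.3.6}--\ref{hyp: log} gives a $\GF$-equivariant $F$-analytic isomorphism $\log \colon \GF_s \to \gF_s$ that restricts to an isomorphism $\GF_{x, s} \to \gF_{x, s}$ at each $x \in \mcB(\G)$; being defined via a faithful representation of $\G$, it carries $\HF$ into $\hF$. Thus $h \in \HF \cap \GF_s$ implies $\log h \in \hF \cap \gF_s = \hF_s$ by the Lie-algebra case, so $\log h \in \hF_{y, s}$ for some $y \in \mcB(\H)$, and exponentiating returns $h \in \HF_{y, s} \subseteq \HF_s$. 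The main obstacle is the reverse inclusion in the Lie-algebra case, and in particular the coordination of the Jordan components so that a \emph{single} $y \in \mcB(\H)$ witnesses $X = X_{\mathrm s} + X_{\mathrm n} \in \hF_{y, s}$, rather than merely separate points for $X_{\mathrm s}$ and $X_{\mathrm n}$ individually.
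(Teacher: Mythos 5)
There is a genuine gap, and it is exactly the one you flag yourself at the end: the ``coordination of the Jordan components so that a single $y \in \mcB(\H)$ witnesses $X = X_{\mathrm s} + X_{\mathrm n} \in \hF_{y,s}$.'' You do not resolve this, and the ``Jacobson--Morozov-style argument'' for the nilpotent part is only a gesture; even granting it, producing separate points for $X_{\mathrm s}$ and $X_{\mathrm n}$ does not place their sum in any single $\hF_{y,s}$. The missing idea is that one should not work with the Jordan decomposition inside the filtration at all: by \cite[Lemma 3.3.8 and Lemma 3.7.18]{AD02}, an element of $\gF$ (respectively, of $\GF$) lies in the domain $\gF_s$ (respectively, $\GF_s$ with $s>0$) if and only if its semisimple part does, and likewise for $\hF_s$ and $\HF_s$. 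This reduces the whole lemma at the outset to comparing the \emph{semisimple} elements of $\hF \cap \gF_s$ with those of $\hF_s$ (and similarly for the group), which dissolves the coordination problem entirely. Your plan also has the logical order reversed relative to what is available: you propose to deduce the group statement from the Lie-algebra one via $\log$, but the clean route is to reduce both statements independently to tori.

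Once restricted to a semisimple $X$ (or $h$), the remaining argument is close to what you sketch, but it should be routed through tori rather than through points of the building: choose a maximal torus $\T_\H$ of $\H$ containing the element and a maximal torus $\T$ of $\G$ containing $\T_\H$ (so $\T \cap \H = \T_\H$), base-change to the tame splitting field using \cite[Lemma 2.2.3]{AD04} (Lie algebra: \cite[Lemma 2.2.5]{AD04}), and then invoke $\TF \cap \GF_s = \TF_s$ and $\TF_\H \cap \HF_s = (\TF_\H)_s$ from \cite[Lemma 2.2.9]{AD04} (Lie algebra: \cite[Corollary 2.2.7]{AD04}); finally, $(\TF_\H)_s = \TF_s \cap \HF$ because every character of the split torus $\T_\H$ extends to $\T$. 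Note that the torus comparisons $\TF \cap \GF_s = \TF_s$ are themselves nontrivial inputs that need citation; they are not ``transparent'' from root-space decompositions alone, since the given point $x \in \mcB(\G)$ with $X \in \gF_{x,s}$ need not lie in the apartment of $\T$. There is no need to ``deform'' $x$ into $\mcB(\H)$.
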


\begin{proof}
By \cite[Lemma 3.3.8 and Lemma 3.7.18]{AD02}, it suffices to show that the sets of semisimple elements in \(\HF \cap \GF_s\) and \(\HF_s\),
and in \(\hF \cap \gF_s\) and \(\hF_s\), are the same.
Let us first prove the `group case', i.e.,
fixing a semisimple element \(h \in \HF\), let us
see that $h \in \GF_s$ if and only if $h \in \HF_s$.
Since $\H$ is connected, we can
choose a maximal torus $\T_{\H}$ in \(\H\) containing \(h\).
Let \(\T\) be a maximal torus in \(\G\) containing \(\T_\H\).
Note that \(\T \cap \H\) is contained in, hence equals, \(\mathrm C_\H(\T_\H) = \T_\H\).
By Hypothesis \ref{hyp: Kim Spi}\ref{hyp: Kim 1.3.6}, we have that \(\T\) splits over a tame extension of \(F\).
By \cite[Lemma 2.2.3]{AD04}, we may, and do, replace \(F\) by \(L_\G\), so that \(\T\), hence also \(\T_\H\), is split.
Now \cite[Lemma 2.2.9]{AD04} gives that \(\TF_\H \cap \HF_s\) equals \((\TF_\H)_s\) and \(\TF \cap \GF_s\) equals \(\TF_s\); and, since \(\T_\H\) and \(\T\) are split and every (algebraic) character of \(\T_\H\) extends to one of \(\T\), we see that
\[
(\TF_\H)_s = \set{t \in \TF_\H}
	{\text{\(\chi_\H(t) - 1\) has valuation at least \(s\) for all characters \(\chi_\H\) of \(\T_\H\)}}
\]
is the intersection with \(\HF\) of
\[
\TF_s = \set{t \in \TF}
	{\text{\(\chi(t) - 1\) has valuation at least \(s\) for all characters \(\chi\) of \(\T\)}}.
\]
The result for the group follows.
The proof in the Lie-algebra case is similar, replacing \cite[Lemma 2.2.3 and Lemma 2.2.9]{AD04} by \cite[Lemma 2.2.5 and Corollary 2.2.7]{AD04}.
\end{proof}

\begin{cor} \label{cor: H domain}
If \(\H\) is a possibly disconnected reductive subgroup of \(\G\),
then we have for all real numbers \(s\) that
\(\hF \cap \gF_s = \hF_s\);
and for all \(s > 0\) that
\(\HF \cap \GF_s = \HF_s\)
and $\log \colon \GF_s \to \gF_s$
and \(\exp \colon \gF_s \to \GF_s\)
restrict to mutually inverse \(p\)-adic analytic isomorphisms
$\HF_s \leftrightarrow \hF_s$
\end{cor}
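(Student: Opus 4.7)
The plan is to reduce the Corollary to the (connected) setting of Lemma \ref{lm: H domain}, leveraging the logarithm and exponential maps supplied by Hypothesis \ref{hyp: Kim Spi}\ref{hyp: log} (passed from $L_{\G}$ to $F$ via the last bullet of Remark \ref{rmk: hyp Kim Spi consequences}) to transfer information between $\H$ and its identity component $\H^\circ$.

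The Lie-algebra equality $\hF \cap \gF_s = \hF_s$ for all $s \in \RR$ is essentially immediate: since $\H^\circ$ is open in $\H$, the Lie algebras coincide ($\hF = \hF^\circ$) and by convention $\hF_s = \hF^\circ_s$, so the statement reduces to the content of Lemma \ref{lm: H domain} applied to the connected reductive subgroup $\H^\circ$ of $\G$.

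For the group equality $\HF \cap \GF_s = \HF_s$ with $s > 0$, the inclusion $\HF_s \subset \HF \cap \GF_s$ is again the Lemma applied to $\H^\circ$, since $\HF_s = \HF^\circ_s = \HF^\circ \cap \GF_s \subset \HF \cap \GF_s$. The real content is the reverse inclusion, which I would handle using the log--exp pair: if $h \in \HF \cap \GF_s$, then $\log h \in \gF_s$ by the hypothesis, and moreover $\log h \in \hF$ because the log map is defined by restricting the matrix logarithm via the chosen faithful representation of $\G$, and the power series defining log carries the closed subgroup $\H$ into its Lie subalgebra $\hF$. Combined with the Lie-algebra equality already proved, this gives $\log h \in \hF \cap \gF_s = \hF_s$. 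Applying $\exp$, which by the same closed-subgroup argument carries $\hF_s \subset \hF^\circ$ into $\H^\circ$, we obtain $h = \exp(\log h) \in \HF^\circ \cap \GF_s = \HF^\circ_s = \HF_s$ by the Lemma. The assertion that $\log \colon \GF_s \to \gF_s$ and $\exp \colon \gF_s \to \GF_s$ restrict to mutually inverse $p$-adic analytic isomorphisms $\HF_s \leftrightarrow \hF_s$ follows at once from these inclusions, since the restricted maps are already known to be mutually inverse on the ambient pair $(\GF_s, \gF_s)$.

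The main obstacle is the assertion that log (respectively exp), defined via the faithful representation of $\G$ in Hypothesis \ref{hyp: Kim Spi}, preserves the closed subgroup $\H$ and its Lie subalgebra. This is a standard fact in $p$-adic Lie theory---the defining power series for $\log$ and $\exp$ commute with any closed embedding of analytic groups and with the corresponding linear embedding of Lie algebras---but one must verify it in the precise framework of Hypothesis \ref{hyp: Kim Spi}\ref{hyp: log}, in particular using that the faithful representation of $\G$ restricts to a faithful representation of $\H^\circ$ identifying $\hF^\circ$ with a Lie subalgebra of $\End(V)$ on which both series converge in the required range. Once this is in hand, the rest is purely formal.
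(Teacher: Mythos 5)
Your overall route is the same as the paper's: reduce everything to the connected case by applying Lemma \ref{lm: H domain} to $\H^{\circ}$, and then use the mutually inverse maps $\log \colon \GF_s \to \gF_s$ and $\exp \colon \gF_s \to \GF_s$ to show that $\HF \cap \GF_s$ is contained in $\HF^{\circ}$. The Lie-algebra equality and the inclusion $\HF_s \subset \HF \cap \GF_s$ are handled exactly as in the paper.

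The gap is at the one step that carries all the content: the claim that $\exp$ carries $\hF_s$ into $\HF^{\circ}$ (and, dually, that $\log$ carries $\HF \cap \GF_s$ into $\hF$). You justify this by saying that the power series ``commute with any closed embedding of analytic groups'' and that one only needs convergence on the required range. Convergence is not the issue: for $X \in \hF$ the partial sums of $\exp(X)$ are not elements of $\HF$, so convergence of the matrix series in $\End(V)$ says nothing about whether the limit satisfies the polynomial equations cutting out the Zariski-closed subgroup $\H$; and the genuinely standard local statement from $p$-adic Lie theory (compatible exponential charts for a closed subgroup near the identity) does not cover the global Moy--Prasad range $\hF_s = \bigcup_{x} \hF_{x, s}$. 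The paper closes this by an algebraic argument: writing $X$ as the sum of its semisimple and nilpotent parts (both of which lie in $\hF_s$ by \cite[Theorem 3.1.2 and Lemma 3.3.8]{AD02}), it observes that the exponential of the nilpotent part is a unipotent element of $\HF$ (by \cite[Corollary 3.8]{Bor98}) and hence lies in $\HF^{\circ}$ because $F$ has characteristic zero, and that the semisimple part lies in $\tF_{\H}$ for some maximal torus $\T_{\H}$ of $\H^{\circ}$, so its exponential lies in $\TF_{\H} \subset \HF^{\circ}$, again by \cite[Corollary 3.8]{Bor98}. The containment $\log(\HF \cap \GF_s) \subset \hF$ is likewise a citation to \cite[Corollary 3.8]{Bor98} rather than a formal power-series manipulation. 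Your argument needs this Jordan-decomposition detour, or an equivalent substitute, to be complete.
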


\begin{proof}
By Lemma \ref{lm: H domain} applied with the
identity component $\H^{\circ}$ of $\H$ in place of $\H$,
we have that
$\hF \cap \gF_s
= \Lie(\H)(F) \cap \gF_s
= \Lie(\H^\circ)(F) \cap \gF_s
= \Lie(\H^\circ)(F)_s
= \hF_s$ for all $s \in \RR$ and
\(\H^\circ(F) \cap \GF_s
= (\H^\circ)(F)_s
= \HF_s\) for all \(s > 0\).
Thus, it remains to show that the inclusion
$\H^{\circ}(F) \cap \GF_s \subset \HF \cap \GF_s$ is an equality
for all $s > 0$.
We slightly abuse notation by writing \(\HF^\circ\) for \(\H^\circ(F)\).

We recall that Hypothesis \ref{hyp: Kim Spi}\ref{hyp: Kim 1.3.6} does
not depend on $r$, and gives us mutually inverse,
\(\GF\)-equivariant,
$p$-adic analytic maps
\(\exp \colon \gF_s \to \GF_s\)
and
$\log \colon \GF_s \to \gF_s$.
By \cite[Corollary 3.8]{Bor98},
the subset
$\log(\HF \cap \GF_s) \subset \gF_s$
lies in \(\hF\), hence in
$\hF \cap \gF_s = \hF_s$.
Thus, \(\log^{-1}(\hF_s)\) contains \(\HF \cap \GF_s\).
It now suffices to show that
$\exp(\hF_s)$ is contained in $\HF^{\circ}$,
so that \(\log^{-1}(\hF_s)\) is contained in \(\HF^\circ\),
and hence \(\HF \cap \GF_s \subset \log^{-1}(\hF_s)\)
is contained in \(\HF^\circ \cap \GF_s = \HF_s\).
\newcommand\semi{_{\text{ss}}}%
\newcommand\nil{_{\text{nil}}}%
Fix an element \(X\) of \(\hF_s\).
By \cite[Theorem 3.1.2 and Lemma 3.3.8]{AD02},
the semisimple parts \(X\semi\) and \(X\nil\) 
of \(X\) also belong to \(\hF_s \subset \gF_s\).
Then $\exp(X) = \exp(X\semi)\exp(X\nil)$.
Because $\exp(X\nil) \in \HF$ is unipotent,
it is contained in $\HF^{\circ}$ (because \(F\) has characteristic \(0\)).
We thus may, and do, replace \(X\) by its semisimple part.
Then there is some maximal torus \(\T_\H\) in \(\H^\circ\)
such that \(X\) lies in \(\tF_\H\).
By \cite[Corollary 3.8]{Bor98} again, we have that
\(\exp(X)\) lies in \(\TF_\H\),
which is contained in \(\HF^\circ\).
\end{proof}

\begin{remark}
Let \(\T\) be a maximal torus in \(\G\) such that \(x\) lies in \(\mcB(\T)\).  We claim that the map \(\overline\exp = \smash{\overline\log}^{-1}\) agrees with the map \(\varphi_{T, x; r, r+}\) of \cite[Section 1.5, pages 12--13]{Adl98}.
(Strictly speaking, for consistency with \cite{Adl98}, we should instead write \(\varphi_{T, \{x\}; r, r + \varepsilon}\) for \(\varepsilon > 0\) sufficiently small, but we will not do do so.)
In fact, this property is part of why those maps are called mock exponential.

Since \(\varphi\) is defined by (tame) descent, it suffices by \cite[Corollary 2.3]{Yu01} to replace \(F\) by \(L_\G\) and \(\G\) by its base change, and so assume that \(\T\) is split.
For each absolute root \(b\) of \(\T\) in \(\Lie(\G)\), we have that \(\exp\) is actually a homomorphism of algebraic groups \(\Lie(\G)_b \to \G\).  It is \(\T\)-equivariant, and hence has image in the corresponding root subgroup.  Since its derivative is the identity, it agrees with the maps \(\exp_b\) of \cite[Section 1.5, page 11]{Adl98}.
Further, Hypothesis \ref{hyp: Kim Spi}\ref{hyp: p big} implies that, for all $t$ of positive valuation in $F$,
the series \(\tau = \sum \frac1{i!}t^i\) converges, and the valuation of
$\tau - 1 - t$ is greater than that of \(t\).
Thus, the restriction of \(\overline\exp\) to \(\tF_r/\tF_{r+}\)
agrees with the map \(\varphi_{T; r, r+} \colon \tF_r/\tF_{r+} \to \TF_r/\TF_{r+}\) of \cite[Section 1.5, page 11]{Adl98}.
Since \(\overline\exp\) and \(\varphi_{T, x; r, r+}\) are both homomorphisms, and since they agree on a set of generators for their domain, it follows that they agree everywhere.
\end{remark}

\begin{notn} \label{notn: LambdaX}
Fix \(X \in \gF\).
\begin{enumerate}[label=(\roman*)]
\item Let $\Lambda_X$ be the character
$\gF \to \CC^{\times}$ given by $Z \mapsto \Lambda(B(X, Z))$.
\item\label{notn: LambdaX group} Given $x \in \mcB(\G)$ and $r > 0$,
let $\Lambda_{X, x, r}$ be
the function $\GF_{x, r} \to \CC^{\times}$
given by $g \mapsto \Lambda(B(X, \log g))$.
If $X \in \gF_{x, -r}$, then Hypothesis \ref{hyp: Kim Spi}\ref{hyp: log}
shows that \(\Lambda_{X, x, r}\) is a character, and
allows us to think of $\Lambda_{X, x, r}$ as
the map $g \mapsto \Lambda(B(X, \overline{\log}(g)))$ on
$\GF_{x, r}/\GF_{x, r+}$.
\end{enumerate}
\end{notn}

\begin{notn}
\label{notn: dual blob}
Suppose $K \subset \GF$ is a compact open subgroup
of the form $\exp(L)$, for a lattice $L \subset \gF$.
Let $\chi \colon K \to \CC^{\times}$ be a continuous
character.
Then the dual blob of $\chi$ or of $(K, \chi)$
is defined to be the set of all
$Y \in \gF$ such that $\chi$ equals $\Lambda_Y \circ \log\res_K$,
provided this set is nonempty.
\end{notn}

Note that if the dual blob of $\chi$ exists, then it is a single coset
in $\gF$ for the kernel $L^{\perp}$ of the homomorphism
$Y \mapsto \Lambda_Y\res_L \in \Hom(L, \CC^{\times})$,
so this notion is in agreement
with \cite[Definition 3.10]{Kim07}.

Recall that, if $r > 0$, then an unrefined
minimal $K$-type of depth $r$ for $\G$ in the
sense of \cite[Section 3.4]{MP96} is a pair
$(\GF_{x, r}, \chi)$, where \(\chi\) is a character of $\GF_{x, r}/\GF_{x, r+}$
(or, by inflation, of \(\GF_{x, r}\))
of the form $\Lambda_{Y, x, r}$ as in
Notation \ref{notn: LambdaX}\ref{notn: LambdaX group}, for some
$x \in \mcB(\G)$ and $Y \in \gF_{x, -r}$, with the property that the coset
$Y + \gF_{x, (-r)+}$ contains
no nilpotent element (i.e., is non-degenerate in the sense of \cite[Section 3.4]{MP96}).
In this case, the coset $Y + \gF_{x, (-r)+}$
is equal to the dual blob of \((\GF_{x, r}, \chi)\)
(and in particular depends on the triple $(\G, x, r)$
only through the subgroup $\GF_{x, r}$).

\subsubsection{Good unrefined minimal $K$-types and
their weak associativity} \label{subsubsec: minimal K type weak associativity}

We now recall notions for unrefined minimal \(K\)-types parallel to the notions for cosets introduced in Subsubsection \ref{subsubsec: good cosets weak associativity}.
Recall that there is a depth-reversing bijection between positive-depth unrefined minimal \(K\)-types and negative-depth, non-degenerate Moy--Prasad cosets that sends each type to its dual blob, i.e.,
\((\GF_{x, r}, \Lambda_{X, x, r})
\mapsto X + \gF_{x, {(-r)+}}\).
We say (following \cite[Definition 3.2.1]{Kim04})
that an unrefined minimal $K$-type
of positive depth is good if its dual blob is a good coset,
so that good (positive-depth) unrefined minimal \(K\)-types are in bijection with good (negative-depth) cosets, and then use the bijection to transport the notion of weak associativity from the latter to the former.
(In particular, since weakly associate good cosets have the same (negative) depth, weakly associate good \(K\)-types have the same (positive) depth.)
As in Subsubsection \ref{subsubsec: good cosets weak associativity},
although our definition might a priori distinguish good \(K\)-types that are equivalent according to \cite[Definition 2.2.1]{Kim04} (which allows a chain of associated \(K\)-types that includes non-good \(K\)-types), actually this does not happen.

The reason we find it convenient to
restrict to good unrefined minimal cosets, unlike
\cite{Kim04}, is that we are able to appeal to one of the culminating results of that paper,
\cite[Theorem 4.5.1]{Kim04} (or \cite[Theorem 2.4.10]{KM03}),
which guarantees the existence of a good unrefined minimal
$K$-type in every element of $\tlG$.

\subsubsection{Remarks on the sets of weak associate
classes of good unrefined minimal
$K$-types and good cosets}
\label{sssec: weak associativity remarks}

Let $\Kset$ be the disjoint union of a singleton set
$\{*\}$ and the set of weak associate
classes of good positive-depth unrefined
minimal $K$-types. Let $\Bset$ be the disjoint union
of a singleton set $\{*'\}$ and the set of all weak associate
classes of negative-depth good cosets.
One thinks of \(*\) as standing for the collection of all depth-\(0\) \(K\)-types, in the sense of \cite[Section 3.4]{MP96}, and of \(*'\) as standing for \(\gF_0\); this is in accord with the depth-\(0\) case of \cite[Definition 2.1.1]{Kim04}.
Passage to weak associate classes affords
a well defined bijection $\iota \colon \Kset \to \Bset$
such that $\iota(*) = *'$ and $\iota(\Kclass) = \Bclass$
whenever there are $\oneK \in \Kclass$ and \(\oneB \in \Bclass\)
such that the dual blob of \(\oneK\) is \(\oneB\)
(in which case the dual blob of every element of \(\Kclass\) belongs to \(\Bclass\)).
Recall that \(\iota\) is depth-reversing, in the sense that, if some \(\oneK\) in \(\Kclass\) has depth \(r\), then, for every coset \(\oneB\) in \(\iota(\Kclass)\), every element of \(\oneB\) has depth \(-r\).

For each weak associate class $\Bclass \in \Bset \setminus \{*'\}$
of negative-depth good cosets,
\cite[Definition 1.4.3]{Kim04}
defines the subset $\gF_{\Bclass} \subset \gF$ to be the union of
all $\Ad \GF(\oneB)$, as $\oneB$ ranges over $\Bclass$. If
$\Bclass = *' \in \Bset$, define $\gF_{\Bclass}$ to
be the Moy--Prasad $\GF$-domain $\gF_0 \subset \gF$.
Given $\Kclass \in \Kset$, we define $\gF_{\Kclass}$ to
be $\gF_{\Bclass}$, where $\Bclass = \iota(\Kclass)$.

Note that, given $\Kclass \in \Kset \setminus \{*\}$,
the set $\Kclass^{\vee}$
consisting of the contragredients of the members of
$\Kclass$ also belongs to $\Kset \setminus \{*\}$. Moreover
\[ \iota(\Kclass^{\vee}) = - \iota(\Kclass)
= \set{- \oneB}{\oneB \in \iota(\Kclass)}. \]

We conclude that $\gF_{\Kclass^{\vee}} = - \gF_{\Kclass}$ for
every weak associate class $\Kclass$ of good positive-depth unrefined
minimal $K$-types.

\subsubsection{Subsets of $\gF$, $\tlG$ and $\hatG$
determined by a good unrefined minimal $K$-type}
\label{sssec:subsets}

By \cite[Lemma 1.4.5]{Kim04}, we have that
\begin{equation*} \label{eqn: g(F) unrefined good minimal decomposition}
\gF = \bigsqcup_{\Bclass \in \Bset} \gF_{\Bclass},
\end{equation*}
and, since each $\gF_{\Bclass}$ is clearly open in $\gF$ and since they
are pairwise disjoint, each is closed as well.
Thus, each $\gF_{\Bclass}$ with
$\Bclass \in \Bset$, and hence also each
$\gF_{\Kclass} \subset \gF$ with $\Kclass \in \Kset$,
is a $\GF$-domain (i.e., a subset of $\gF$ that
is open, closed and invariant under $\GF$-conjugation).

Following \cite[Remark 2.2.2]{Kim04},
for $\Kclass \in \Kset \setminus \{*\}$, let
$\tlG_{\Kclass}$
(respectively, $\hatG_{\Kclass}$) be the set of
$\pi \in \tlG$ (respectively, $\pi \in \hatG$) such
that $\pi$ contains an element of $\Kclass$;
and then, following \cite[Definition 3.2.3]{Kim04},
for $\Bclass \in \Bset \setminus \{*'\}$, let
$\tlG_{\Bclass}$
(respectively, $\hatG_{\Bclass}$)
denote the set of all elements of $\tlG$ (respectively, $\hatG$)
containing an unrefined minimal $K$-type whose dual
blob belongs to $\Bclass$ (hence is good).
Let
$\tlG_* = \tlG_{*'}$ (respectively, $\hatG_* = \hatG_{*'}$)
denote the set of depth-zero representations
in $\tlG$ (respectively, $\hatG$).
Note that \(\tlG_{\Kclass}\) equals \(\tlG_{\Bclass}\)
and \(\hatG_{\Kclass}\) equals \(\hatG_{\Bclass}\)
whenever \(\Bclass = \iota(\Kclass)\).

\begin{remark} \label{rmk: GS GmcS}
As we recalled in the introduction,
\cite[Remark 2.2.2]{Kim04} defines a set $\tilde G_\oneK$
for every (not necessarily good) positive-depth unrefined minimal
$K$-type $\oneK$. Given such an $\oneK$,
we have by \cite[Lemma 1.4.5]{Kim04} that there exist
good, positive-depth unrefined minimal $K$-types that are weakly associate to $\oneK$
in the wider sense of \cite[Definition 2.2.1]{Kim04},
and by our prior discussion that the set of such good unrefined minimal \(K\)-types forms a weak associate class \(\Kclass\) in our sense.
By \cite[Theorem 4.5.1]{Kim04} (or \cite[Theorem 2.4.10]{KM03})
and the associativity of the unrefined minimal $K$-types contained in a fixed
element of $\tlG$ \cite[Theorem 3.5]{MP96}, we have that
$\tlG_\oneK$ equals the set $\tlG_{\Kclass}$ that we have
just defined. Moreover,
$\tlG$ and $\hatG$ are respectively the disjoint
unions of the sets $\tlG_{\Kclass}$ and the $\hatG_{\Kclass}$,
as $\Kclass$ runs over $\Kset$.
\end{remark}

\subsection{The Bernstein center and unrefined minimal $K$-types}

\begin{remark} \label{rmk: mcZ(G) review}
We briefly summarize some facts concerning the Bernstein center
$\mcZ(\G)$ of $\G$, referring the reader to \cite[Section 1.3]{BKV16}
or to \cite{Ber84} for more details. The ring $\mcZ(\G)$
can be realized
as the ring $\CC[\Omega(\G)]$ of regular functions on $\Omega(\G)$.
Giving an element $z \in \mcZ(\G)$ is equivalent to giving
an endomorphism $\pi(z)$ of each object $\pi$ in the category
of smooth representations of $\GF$,
such that \(\pi \mapsto \pi(z)\) is an endomorphism of the identity functor of that category (i.e., such that it respects the morphisms in the category).
One thinks of $\pi(z)$ as specifying
a $\GF$-equivariant action of $z$ on the space
of $\pi$.
Note that, when \(\pi\) is irreducible, the only such action is by scalar multiplication.
Recall that every \(\pi \in \tlG\) determines an infinitesimal character \(\inf(\pi) \in \Omega(\G)\).
We can match our two perspectives on the Bernstein center by requiring that, for every \(z \in \mcZ(\G)\) and \(\pi \in \tlG\), the action \(\pi(z)\) is by multiplication by \(z(\inf(\pi))\).
One can also
realize $\mcZ(\G)$ as the set of invariant distributions $z$ on $\GF$
such that $z * f \in C_c^{\infty}(\GF)$ for all $f \in C_c^{\infty}(\GF)$;
this connects to the earlier description by thinking of
$z * f$ as $\pi(z)(f)$, where $\pi = \ell$ is the left-regular representation
of $\GF$ on $C_c^{\infty}(\GF)$.
\end{remark}

\begin{lm} \label{lm: types are types}
For every
weak associate class $\Kclass$ of positive-depth, good unrefined
minimal $K$-types for $\G$,
the subset $\tlG_{\Kclass} \subset \tlG$
is a union of Bernstein components of $\G$.
\end{lm}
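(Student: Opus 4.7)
My plan is to show that every Bernstein component of $\tlG$ is contained in a single $\tlG_\Kclass$, so that by the partition from Remark \ref{rmk: GS GmcS}, each $\tlG_\Kclass$ is a union of Bernstein components. Since Bernstein components are the fibers of the cuspidal support map, and cuspidal supports are parametrized by $\GF$-conjugacy classes of pairs $(\M, \sigma)$, this reduces to proving: for every such pair, all $\pi \in \tlG$ with cuspidal support $[\M, \sigma]$ lie in a common $\tlG_\Kclass$. The depth-zero case follows from the classical fact that depth is preserved under parabolic induction, placing every such $\pi$ in $\tlG_*$; so I focus on $\depth(\sigma) = r > 0$.

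The first step is to pin down the target $\Kclass$ from the cuspidal support. By \cite[Theorem 4.5.1]{Kim04} applied to $\sigma \in \tilde M$, there is a good unrefined minimal $\MF$-type $(\MF_{x, r}, \chi_\MF)$ in $\sigma$ with dual blob $\Gamma + \mF_{x, (-r)+}$, for some $\Gamma \in \mF$ good of depth $-r$ in $\mF$. Viewing $\Gamma$ as a semisimple element of $\gF$, the partition from Subsubsection \ref{sssec:subsets} assigns $\Gamma$ to a unique $\gF_\Bclass$ with $\Bclass \in \Bset$; set $\Kclass \ceq \iota^{-1}(\Bclass)$. I then realize any $\pi$ with cuspidal support $[\M, \sigma]$ as an irreducible subquotient of the parabolic induction $\Ind_\PF^\GF(\sigma)$ for a parabolic $\PF$ with Levi $\MF$ and unipotent radical $\NF$. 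By Casselman's theorem (using supercuspidality of $\sigma$), the Jacquet module of $\pi$ along $\NF$ contains $\sigma$ as a subquotient, hence contains $\chi_\MF$-isotypic vectors for $\MF_{x, r}$. A Mackey/Frobenius reciprocity argument, using the compatibility $\GF_{x, r} \cap \MF = \MF_{x, r}$ from Lemma \ref{lm: H domain} and a choice of $x \in \mcB(\M) \subset \mcB(\G)$ compatibly adapted to $\PF$, then produces a (possibly non-good) unrefined minimal $\GF$-type $(\GF_{x, r}, \Lambda_{\Gamma, x, r})$ in $\pi$, with dual blob $\Gamma + \gF_{x, (-r)+}$. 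Applying \cite[Theorem 4.5.1]{Kim04} to $\pi$ yields a good unrefined minimal $\GF$-type in $\pi$; by \cite[Theorem 3.5]{MP96}, its dual blob is $\GF$-associate to $\Gamma + \gF_{x, (-r)+}$, and by Kim's analysis of weak associativity (Subsubsection \ref{subsubsec: good cosets weak associativity}) this good dual blob lies in $\gF_\Bclass$, placing $\pi \in \tlG_\Kclass$.

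The principal obstacle is the Mackey/Frobenius step, namely the lifting of $\chi_\MF$-isotypic vectors from the Jacquet module of $\pi$ to the minimal $\GF$-type $(\GF_{x, r}, \Lambda_{\Gamma, x, r})$ in $\pi$ itself: this is a technically delicate result from the general theory of types (in the spirit of Borel's theorem for Iwahori-fixed vectors and Bernstein's second adjointness), and requires the choice of $x$ to be compatible with the parabolic $\PF$. A secondary point is the final $\gF_\Bclass$-transfer: one must verify that if $\GF$-associate dual blobs $\Gamma + \gF_{x, (-r)+}$ (non-good, containing the element $\Gamma \in \gF_\Bclass$) and $\Gamma' + \gF_{y, (-r)+}$ (good) are given, then the second lies in $\gF_\Bclass$. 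This uses the $\GF$-invariance of $\gF_\Bclass$ combined with \cite[Lemma 1.4.2]{Kim04} (noted in Subsubsection \ref{subsubsec: good cosets weak associativity}), which ensures that $\GF$-associate good cosets are weakly $\GF$-associate, hence lie in the same $\gF_\Bclass$.
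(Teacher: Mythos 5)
Your proposal follows essentially the same route as the paper's proof: pass to the supercuspidal support, extract a minimal $K$-type of the Levi contained in $\sigma$, lift it through the Jacquet module to an unrefined minimal $K$-type of $\GF$ contained in every representation of the Bernstein component, and conclude via the associativity of the minimal $K$-types contained in a fixed representation \cite[Theorem 3.5]{MP96}. The ``principal obstacle'' you flag --- passing from $\chi_\MF$-isotypic vectors in the Jacquet module to the unrefined minimal $K$-type $(\GF_{x,r},\Lambda_{\Gamma,x,r})$ of $\pi$ itself --- is precisely \cite[Theorem 4.5]{MP96}, which is what the paper invokes at that point (after using \cite[Theorem 2.5]{MP96} and Frobenius reciprocity to see that the Jacquet module of $\pi$ surjects onto $\sigma$); note that the paper obtains the non-degeneracy of the lifted coset as an \emph{output} of that theorem rather than asserting it in advance. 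The remaining differences --- your use of \cite[Theorem 4.5.1]{Kim04} in place of \cite[Theorem 3.5(2)(ii)]{MP96} to produce the Levi's $K$-type, and your bookkeeping of the target class through the element $\Gamma$ and the partition $\gF=\bigsqcup_{\Bclass}\gF_{\Bclass}$ together with \cite[Lemma 1.4.2]{Kim04}, rather than directly through weak associativity of $K$-types --- are cosmetic.
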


\begin{remark}
\label{rmk: GS types are types}
By Remark \ref{rmk: GS GmcS},
Lemma \ref{lm: types are types} is equivalent to the claim that,
for every positive-depth unrefined minimal $K$-type
$\oneK$ for $\G$, the subset $\tlG_\oneK \subset \tlG$
defined in the introduction
is a union of Bernstein components of $\G$.
It is this that we prove.
\end{remark}

\begin{proof}
Our justification will be
along the lines of the second proof of
\cite[Theorem 4.5.1(2)]{Kim04}.

We first claim that, for every positive-depth Bernstein component \(\mathfrak B\), there is a positive-depth unrefined minimal \(K\)-type \(\oneK_{\mathfrak B}\) that is contained in every representation in \(\mathfrak B\).
Indeed, consider the Bernstein component indexed by the inertial equivalence class of \((\M, \sigma)\), where \(\sigma\) has positive depth \(r\).
By \cite[Theorem 3.5(2)(ii)]{MP96}, there is a depth-\(r\), unrefined minimal \(K\)-type \(\oneK_\M\) contained in \(\sigma\).
Write \(\oneB_\M = Y + \mF^*_{x, {(-r)+}}\) for the dual blob of \(\oneK_\M\),
and put \(\oneK_{\mathfrak B} = (\GF_{x, r}, \Lambda_{Y, x, r})\).
If \(\pi\) lies in the chosen Bernstein component, then there is some unramified character \(\lambda_\M\) of \(\MF\) such that \(\pi\) has \((\M, \sigma \otimes \lambda_\M)\) as a supercuspidal support.  Note that \(\lambda_\M\) is trivial on every compact subgroup of \(\MF\), so that \(\sigma \otimes \lambda_\M\) also contains \(\oneK_\M\).
We have by \cite[Theorem 2.5]{MP96} that there is some parabolic subgroup \(\P\) of \(\G\) with Levi component \(\M\) such that \(\pi\) is a subrepresentation of the representation \(\Ind_\P^\G(\sigma)\) of $\GF$ obtained by unnormalized parabolic induction from $\sigma$. Hence by Frobenius reciprocity (e.g., in the form recalled at the end of \cite[Section 2.3]{MP96}), the Jacquet module of $\pi$ surjects to $\sigma$. Therefore, we have by \cite[Theorem 4.5]{MP96} that \(\pi\) contains a positive-depth, unrefined \(K\)-type \(\oneK_\pi = (\GF_{x, r}, \xi)\) that restricts to \(\oneK_\M\), and whose dual blob \(\oneB_\pi\) intersects \(\mF^*\).
Since, in this case, the dual blob \(\oneB_\M\) of the restriction \(\oneK_\M\) of \(\oneK_\pi\) is just \(\oneB_\pi \cap \mF^*\), it follows that \(\oneB_\pi\) contains \(\oneB_\M\), hence equals \(Y + \gF^*_{x, {(-r)+}}\).  That is, \(\oneK_\pi\) equals \((\GF_{x, r}, \Lambda_{Y, x, r}) = \oneK_{\mathfrak B}\); so we have shown that \(\pi\) contains \(\oneK_{\mathfrak B}\).

Now fix any positive-depth, unrefined minimal \(K\)-type \(\oneK\).
Remember that our notion of weak associativity
is not the same as the broader notion of \cite[Definition 2.2.1]{Kim04},
but rather is its restriction to good unrefined minimal \(K\)-types;
and that \(\tlG_\oneK\) is the set of representations containing a weak associate of \(\oneK\) in the broader sense.
Now suppose that \(\pi_1\) belongs to \(\tlG_\oneK\),
hence has positive depth (by \cite[Theorem 3.5]{MP96})
and contains some weak associate \(\oneK_1\) of \(\oneK\)
(in the broader sense);
and that \(\pi_2\) belongs to the same Bernstein component \(\mathfrak B\) as \(\pi_1\).
Then both \(\pi_1\) and \(\pi_2\) contain \(\oneK_{\mathfrak B}\).
By \cite[Theorem 3.5]{MP96} again, we have that \(\oneK_1\) and \(\oneK_{\mathfrak B}\) are associate.
This means that \(\oneK_{\mathfrak B}\), which is contained in \(\pi_2\),
is \emph{weakly} associate to \(\oneK\)
(in the broader sense),
hence that \(\pi_2\) belongs to \(\tlG_\oneK\).
By Remark \ref{rmk: GS types are types}, we have proven the result.
\end{proof}

\subsection{The Bernstein projector for a weak associate
class of good unrefined minimal $K$-types}
\label{subsec: projector}

Let $\Kclass$ be a weak associate class
of good (positive-depth) unrefined minimal $K$-types.
Write \(r\) for the depth of every \(K\)-type in \(\Kclass\).
As in the introduction, we let $E_{\Kclass}$ be the element
of $\mcZ(\G)$ that acts as the identity
on the set $\tlG_{\Kclass} \subset \tlG$
and is zero on all the elements of $\tlG \setminus \tlG_{\Kclass}$.

\begin{df} \label{df: S-projector}
Let $\Kclass$ be a weak associate class of
good unrefined minimal $K$-types of positive depth.
\begin{enumerate}
\item We call $E_{\Kclass}$
the $\Kclass$-projector or the Bernstein $\Kclass$-projector.
\item Let $\mcE_{\Kclass}$ be the distribution on $\gF$ obtained as the
inverse Fourier transform of the
distribution represented by the
characteristic function $\charfn_{\gF_{\Kclass}}$ of $\gF_{\Kclass}$
(which is a locally constant function,
as $\gF_{\Kclass}$ is open and closed in $\gF$).
\end{enumerate}
\end{df}

\begin{remark} \label{rmk: inverse Fourier transform}
Since we have imposed the Haar measure on $\gF$ that
is self-dual with respect to $\Lambda$ and
$B$ (see Remark \ref{rmk: hyp Kim Spi consequences}),
we have that $\mcE_{\Kclass}$ is also the Fourier transform
of the distribution on $\gF$ represented
by the function $X \mapsto \charfn_{\gF_{\Kclass}}(-X)$,
i.e.,
$\mcE_{\Kclass}$ is the Fourier transform of the distribution
represented by
the characteristic function $\charfn_{\gF_{\Kclass^{\vee}}}$ of
$\gF_{\Kclass^{\vee}}$.
\end{remark}

\section{The proof of Theorem \ref{thm: main}\ref{thm: log E} ---
the description of $E_{\Kclass}$ near the identity}

We now explain the sense in which Theorem \ref{thm: main}\ref{thm: log E} is a reformulation of
\cite[Theorem 3.3.1]{Kim04}.

\subsection{The Bernstein center and the Plancherel formula}

Recall that $\hatG \subset \tlG$ is the tempered dual of
$\G$. For every $f \in C_c^{\infty}(\GF)$, the Plancherel
formula gives an equation of the following form:
\begin{equation} \label{eqn: Plancherel formula}
f(1)
= \int_{\hatG} \Theta_{\pi}(f) \, d\pi
= \int_{\hatG} \Theta_{\pi^{\vee}}(f) \, d\pi,
\end{equation}
where $\pi^{\vee}$ denotes
the contragredient of $\pi$, and where
$d\pi$ refers to the Plancherel measure.
Here, we note that
\cite[p.~56, Equation (Pl)]{Kim04} gives the equality
of the first two terms, and that this automatically
gives the equality of the first term and the third.
Indeed, if $f^{\vee} \in C_c^{\infty}(\GF)$
is defined by requiring that $f^{\vee}(g) = f(g^{-1})$ for all $g \in \GF$,
then $f(1) = f^{\vee}(1)$, and
it is easy to see that for all $\pi \in \tlG$,
the operator $\pi^{\vee}(f)$ on the space of \(\pi^\vee\)
is the transpose of
the operator $\pi(f^{\vee})$ on the space of \(\pi\),
so that $\Theta_{\pi^{\vee}}(f)
= \Theta_{\pi}(f^{\vee})$.

With this notation, for all $z \in \mcZ(\G)$ and
$f \in C_c^{\infty}(\GF)$,
one can use the equality
of the first and second terms of Equation \eqref{eqn: Plancherel formula}
to write
in the spirit of \cite[Equation (2.4.3)]{MT02}:
\begin{equation} \label{eqn: 2.4.3 MT02}
z(f) = (z * f^{\vee})(1)
= \int_{\hatG} \Theta_{\pi}(z * f^{\vee}) \, d\pi
= \int_{\hatG} z(\pi) \Theta_{\pi}(f^{\vee}) \, d\pi
= \int_{\hatG} z(\pi) \Theta_{\pi^{\vee}}(f) \, d\pi,
\end{equation}
where in the left-most term, $z$ is thought of as
a distribution on $\GF$ and evaluated at
$f \in C_c^{\infty}(\GF)$,
while in the fourth and the fifth terms,
we have written $z(\pi)$ for the value $z(\inf(\pi))$ of $z \in
\mcZ(\G) = \CC[\Omega(\G)]$ at the image $\inf(\pi)$ of $\pi$ in
$\Omega(\G)$,
i.e., for the scalar by which \(\pi(z)\) acts on the space of \(\pi\);
or the equality of the first and the third
terms in Equation \eqref{eqn: Plancherel formula} to write
\begin{equation} \label{eqn: 2.4.3 MT02 second version}
z(f) = (z * f^{\vee})(1)
= \int_{\hatG} \Theta_{\pi^{\vee}}(z * f^{\vee}) \, d\pi
= \int_{\hatG} z(\pi^{\vee}) \Theta_{\pi^{\vee}}(f^{\vee}) \, d\pi
= \int_{\hatG} z(\pi^{\vee}) \Theta_{\pi}(f) \, d\pi.
\end{equation}

\subsection{Theorem \ref{thm: main}\ref{thm: log E}
as a reformulation of a result of Kim}

\begin{proof}[Proof of Theorem \ref{thm: main}\ref{thm: log E}]
For every $f \in C_c^{\infty}(\gF_r)$, we obtain
a function $f \circ \log \in C_c^{\infty}(\GF_r)$, which we view,
after extending by zero, as an element of $C_c^{\infty}(\GF)$.
Using \cite[Theorem 3.3.1]{Kim04},
we have for every $f \in C_c^{\infty}(\gF_r)$ that
\begin{equation} \label{eqn: Kim04 main input}
\int_{\gF_{\Kclass^{\vee}}} \hat f(X) \, dX
= \int_{\gF_{-\Bclass}} \hat f(X)\,dX
= \int_{\hatG_{-\Bclass}} \Theta_\pi(f \circ \log)\,d\pi
= \int_{\hatG_{\Kclass^{\vee}}} \Theta_{\pi}(f \circ \log) \, d\pi,
\end{equation}
where \(\Bclass = \iota(\Kclass)\),
provided we verify
that our choices of measures agree
with those of \cite{Kim04}.

The right-hand side of Equation \eqref{eqn: Kim04 main input}
is actually independent of the choice
of the measures once Equation \eqref{eqn: Plancherel formula}
is imposed, as is the case here as well as in \cite{Kim04}
(thus, this choice fixes
the product of the Haar measure on $\GF$ and the Plancherel measure).

On the left-hand side of Equation \eqref{eqn: Kim04 main input},
we have taken $dX$
to be the Haar measure on $\gF$
that is self-dual with respect to $\Lambda$ and $B$
(see Remark \ref{rmk: hyp Kim Spi consequences}).
The condition
``$\vol_{\gF}(\gF_{x, r})\vol_{\gF}(\gF_{x, (-r)+}) = 1$''
that appears below \cite[Equation (4)]{Kim04}
(and the fact that \(\gF_{x, r}\)
is the set of those elements \(Y \in \gF\) for which
\(\langle Y, \gF_{x, (-r)+} \rangle\)
		is contained in the maximal ideal of \(\mO\))
shows that \cite{Kim04} is also using this self-dual measure.

The left-hand side of Equation \eqref{eqn: Kim04 main input} equals
$\mcE_{\Kclass}(f)$ (see Remark \ref{rmk: inverse Fourier transform}).
Thus, it suffices to show that its right-hand side
equals $E_{\Kclass}(f \circ \log)$.
Since $\pi \in \hat \G_{\Kclass}$
if and only if $\pi^{\vee} \in \hat \G_{\Kclass^{\vee}}$,
this follows from Equation \eqref{eqn: 2.4.3 MT02 second version}.
\end{proof}

\begin{remark}
Thus, Theorem \ref{thm: main}\ref{thm: log E} is merely
a restatement of \cite[Theorem 3.3.1]{Kim04},
using the spectral description of elements of the Bernstein center
and the Plancherel formula.
In fact, the proof of Theorem \ref{thm: main}\ref{thm: E support} that
we give in Section \ref{sec: A proof of thm:main(i)}
amounts to adapting the strategy of
the proof of \cite[Theorem 3.3.1]{Kim04} to neighborhoods
of suitable non-identity elements of $\GF$,
via semisimple descent.
Just as the proof of \cite[Theorem 3.3.1]{Kim04} uses
the character expansion of \cite{KM03}, the proof
of Theorem \ref{thm: main}\ref{thm: log E} uses the second author's generalization
of the character expansion of \cite{KM03} to an
expansion about non-identity semisimple elements
of $\GF$, namely, \cite[Theorem 4.4.11]{Spi18}.
\end{remark}

\section{A proof of Theorem \ref{thm: main}\ref{thm: E support} ---
$E_{\Kclass}$ vanishes away from the identity}
\label{sec: A proof of thm:main(i)}
Let us begin with some preparation.
We keep the fixed weak associate class \(\Kclass\),
and its depth \(r\),
of Subsection \ref{subsec: projector}.
We now also fix as a good element $\Gamma \in - \gF_{\Kclass}
= \gF_{\Kclass^{\vee}}$,
and write $\G'$ for the centralizer of $\Gamma$ in $\G$.
Note that $\Gamma$ has depth $-r$
(see Subsubsection \ref{sssec: weak associativity remarks}),
and that
\((\GF_{x, r}, \Lambda_{-\Gamma, x, r})\) belongs to \(\Kclass\)
for every \(x \in \mcB(\G')\).

\subsection{Some functions on which $E_{\Kclass}$ acts
as the identity}

\begin{lm} \label{lm: non good types}
Let $x \in \mcB(\G') \subset \mcB(\G)$, let $X' \in
\gF'_{x, -r} \cap \gF'_{(-r)+}$, and consider
$\Lambda_{\Gamma + X', x, r} \in C_c^{\infty}(\GF_{x, r})
\subset C_c^{\infty}(\GF)$. Then
$E_{\Kclass} * \Lambda_{\Gamma + X', x, r} = \Lambda_{\Gamma + X', x, r}$.
\end{lm}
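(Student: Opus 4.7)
The plan is to verify the claim spectrally. By Remark \ref{rmk: mcZ(G) review}, an element of $\mcZ(\G)$ is determined by its action on smooth representations, and $\pi(E_{\Kclass})$ is the identity on $\pi \in \tlG_{\Kclass}$ and zero otherwise. So it will suffice to show that, for every irreducible $\pi \in \tlG$ with $\pi(f) \ne 0$, where $f := \Lambda_{\Gamma + X', x, r}$ extended by zero to $\GF$, we have $\pi \in \tlG_{\Kclass}$.

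The first step is to interpret $f$ as an isotypic idempotent. Combining the goodness of $\Gamma$ (which places $\Gamma \in \gF'_{x, -r}$ for $x \in \mcB(\G')$) with $X' \in \gF'_{x, -r}$ and Corollary \ref{cor: H domain}, we get $\Gamma + X' \in \gF_{x, -r}$, so Notation \ref{notn: LambdaX}\ref{notn: LambdaX group} makes $f$ equal to $\vol(\GF_{x, r})$ times the isotypic idempotent on $\GF_{x, r}$ for the character $\chi := \Lambda_{-(\Gamma + X'), x, r}$. Hence $\pi(f) \ne 0$ is equivalent to $\pi$ containing the $K$-type $(\GF_{x, r}, \chi)$, whose dual blob is $C := -(\Gamma + X') + \gF_{x, (-r)+}$.

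The main structural input will be that $(\GF_{x, r}, \chi)$ is an unrefined minimal $K$-type that is weakly associate, in the broader sense of \cite[Definition 2.2.1]{Kim04}, to a good $K$-type in $\Kclass$. For the weak associativity, pick (using $X' \in \gF'_{(-r)+}$) a point $y \in \mcB(\G')$ with $X' \in \gF'_{y, (-r)+}$; by Corollary \ref{cor: H domain}, $-(\Gamma + X')$ then lies in $-\Gamma + \gF_{y, (-r)+}$, which is a good coset (because $-\Gamma$ is good of depth $-r$ and $y \in \mcB(\G') = \mcB(Z_\G(-\Gamma))$) belonging to $\iota(\Kclass)$ (because $-\Gamma$ lies in $\gF_{\Kclass}$ and any two good cosets containing a common element are associate). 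Since $C$ and this good coset share the element $-(\Gamma + X')$, they are associate. For non-degeneracy of $C$, since every element of $C$ has the same image in the reductive quotient $\gF_{x, -r}/\gF_{x, (-r)+}$ and nilpotent elements of $\gF_{x, -r}$ have nilpotent images in this quotient, it suffices to show $[-(\Gamma + X')]$ is non-nilpotent there. The key point is that for a maximal torus $\T \subset \G'$ with $\Gamma \in \tF$, the containment $X' \in \gF'_{y, (-r)+}$ forces the $\tF$-component of $X'$ to lie in $\tF_{(-r)+}$, so the image of $\Gamma + X'$ in the quotient retains the nonzero semisimple contribution $[\Gamma] \in \tF_{-r}/\tF_{(-r)+}$.

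Once these structural facts are in place, the argument of Lemma \ref{lm: types are types} applies: \cite[Theorem 4.5.1]{Kim04} gives a good $K$-type $\oneK_0$ in $\pi$, \cite[Theorem 3.5]{MP96} gives that $\oneK_0$ is associate to $(\GF_{x, r}, \chi)$, and hence $\oneK_0$ is weakly associate (in the broader sense) to a good $K$-type in $\Kclass$; by the narrowing discussed in Subsubsection \ref{subsubsec: minimal K type weak associativity}, this forces $\oneK_0 \in \Kclass$, and so $\pi \in \tlG_{\Kclass}$. The main obstacle will be the non-degeneracy verification, which requires careful harmonization of the Moy--Prasad filtrations at the possibly distinct points $x$ and $y$.
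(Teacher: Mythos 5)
Your spectral reduction is exactly the paper's: both arguments reduce, via the action of the Bernstein centre and the Plancherel formula, to showing that any irreducible $\pi$ containing the character $\Lambda_{-(\Gamma+X'),x,r}$ of $\GF_{x,r}$ must lie in $\tlG_{\Kclass}$. Where you diverge is at the key step. The paper disposes of it in one stroke by citing \cite[Lemma~2.4.11]{KM03}, which says precisely that such a $\pi$ also contains $(\GF_{y,r},\Lambda_{-\Gamma,y,r})$ for some $y\in\mcB(\G')$, hence an element of $\Kclass$. You instead promote $(\GF_{x,r},\Lambda_{-(\Gamma+X'),x,r})$ to an unrefined minimal $K$-type in its own right and rerun the bookkeeping of Lemma~\ref{lm: types are types} (\cite[Theorem~4.5.1]{Kim04}, \cite[Theorem~3.5]{MP96}, and the agreement of the two notions of weak associativity on good $K$-types). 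That chain is valid and buys a proof from ``softer'' inputs already used elsewhere in the paper --- but only \emph{granted} that the dual blob $C=-(\Gamma+X')+\gF_{x,(-r)+}$ is non-degenerate, since \cite[Theorem~3.5]{MP96} says nothing about degenerate characters.

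That non-degeneracy verification is a genuine gap, and you flag it yourself. Non-degeneracy means that $C$ contains no nilpotent element of $\gF$; this is \emph{not} the same as its single image vector in $\gF_{x,-r}/\gF_{x,(-r)+}$ being ``non-nilpotent.'' The correct dictionary is that degeneracy of the coset corresponds to \emph{instability} of its image under the reductive quotient at $x$ (a Hilbert--Mumford statement), and your observation that the image retains a nonzero component $[\Gamma]$ in the zero-weight space of one maximal torus does not rule out instability: a destabilizing one-parameter subgroup need not lie in that torus, and conjugating the vector into position changes its weight decomposition. Note also that $X'$ is only assumed to lie in $\gF'_{x,-r}\cap\gF'_{(-r)+}$, not in $\gF_{x,(-r)+}$, so $C$ is genuinely not the good coset $-\Gamma+\gF_{x,(-r)+}$ and its non-degeneracy is not immediate. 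The clean ways to close the gap are either to recognize $\Gamma+X'+\gF_{x,(-r)+}$ as a good coset in the sense of \cite[Definition~1.2.2]{Kim04} (via \cite[Theorem~2.3.1]{KM03}), for which non-degeneracy is known, or simply to quote \cite[Lemma~2.4.11]{KM03} as the paper does, which bypasses the issue entirely.
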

\begin{proof}
If $f_1, f_2 \in C_c^{\infty}(\GF)$, then, to show that $f_1 = f_2$,
it suffices by the Plancherel formula to show that
$\pi(f_1) = \pi(f_2)$ for each irreducible admissible representation
$\pi$ of $\GF$
(note that `$\pi(f_1) = \pi(f_2)$' refers to an equality of two linear
operators, and not just of their traces).
Thus, let $\pi \in \tlG$.
We need to show that $\pi(E_{\Kclass} * \Lambda_{\Gamma + X', x, r})
= \pi(\Lambda_{\Gamma + X', x, r})$.
Since $\pi(E_{\Kclass} * \Lambda_{\Gamma + X', x, r})
= \pi(E_{\Kclass}) \pi(\Lambda_{\Gamma + X', x, r})$,
it suffices to
show that if $\pi(\Lambda_{\Gamma + X', x, r}) \neq 0$,
then $\pi(E_{\Kclass})$ is the identity.
In other words, assuming that $\pi$ contains
$(\GF_{x, r}, \Lambda_{-\Gamma - X', x, r})$,
we need to show that $\pi$ contains an unrefined minimal
$K$-type which belongs to the weak associate class $\Kclass$.
Since $\Gamma$ is a good element
in $\gF_{\Kclass^{\vee}} = - \gF_{\Kclass}$,
any unrefined minimal $K$-type of the form
$(\GF_{y, r}, \Lambda_{-\Gamma, y, r})$, with $y \in \mcB(\G')$,
belongs to \(\Kclass\).
Thus, it suffices to show that, whenever
$\pi$ contains $(\GF_{x, r}, \Lambda_{- \Gamma - X', x, r})$,
it also contains $(\GF_{y, r}, \Lambda_{- \Gamma, y, r})$ for some
$y \in \mcB(\G')$. Since we are imposing Hypothesis \ref{hyp: Kim Spi}\ref{hyp: Kim 1.3.6},
which includes \cite[Hypothesis (HB)]{KM03},
the desired result follows from \cite[Lemma 2.4.11]{KM03}
(with our $-\Gamma$, $-X'$ and $r$ as
the $\Gamma$, $X'$ and $\varrho$, respectively, of that
lemma).
\end{proof}

\subsection{Review of asymptotic expansions around semisimple elements}
\label{subsec: asymptotic review}

In \cite[Theorem 5.3.1]{KM03}, Kim and Murnaghan proved that if $\pi$ is an
irreducible admissible representation of $\GF$ of
depth $r$, then
a variant of the Howe--Harish-Chandra
local character expansion for $\pi$ at the identity element
of $\GF$ is valid on $\gF_r$, which is bigger than the region
$\gF_{r+}$ on which the validity of the Howe--Harish-Chandra expansion
was proved by S.~DeBacker (following J.-L.~Waldspurger).
J.~Adler and J.~Korman used semisimple descent to generalize
the work of DeBacker
to give a range of validity for the
Howe--Harish-Chandra expansion around a non-identity semisimple element
\cite[Corollary 12.10]{AK07}.
Similarly, the second author generalized the work of \cite{KM03}
to give an asymptotic expansion
in the spirit of \cite{KM03} around many
semisimple elements of $\GF$, as well as an
explicit region for its validity that is in general bigger than
the analogous region in \cite{AK07}.

All these statements are
only true under appropriate technical hypotheses.
We will impose two hypotheses --- Hypotheses
\ref{hyp: Spi 1} and
\ref{hyp: Spi 2} --- that allow us to 
use some results, particularly Theorem 4.4.11, from \cite{Spi18}.
Of these, we now state
Hypothesis \ref{hyp: Spi 1} and proceed to
make a few observations towards setting the stage
for the statement of
Hypothesis \ref{hyp: Spi 2} in
Subsection \ref{subsec: hyp Spi}.

\begin{hyp} \label{hyp: Spi 1}
There is a collection $\{\gamma\}$
of semisimple elements of $\GF$,
containing \(\gamma = 1\),
with the following properties.
\begin{enumerate}[label=(\alph*)]
\item\label{hyp: good}
For each \(\gamma\) in the collection,
each eigenvalue $\lambda \in \bar F$ of $\Ad \gamma$ on
$\gF \otimes_F \bar F$ satisfies $\val(\lambda - 1) < r$
or \(\lambda = 1\).
\item\label{hyp: cover}
For each \(\gamma\) in the collection,
write \(\mathrm C_\G(\gamma)^\circ\) for the connected centralizer of \(\gamma\),
and \(\mcU_\gamma\) for the union \({^{\GF}(\gamma\cdot\mathrm C_\G(\gamma)^\circ(F)_r)}\)
of all $\GF$-conjugates of elements of $\gamma\cdot\mathrm C_\G(\gamma)^\circ(F)_r$.
Then \(G\) equals \(\bigcup_\gamma \mcU_\gamma\).
\end{enumerate}
\end{hyp}

Let $\gamma' \in \GF \setminus \GF_r$. We need
to show that the distribution $E_{\Kclass}$ is zero on a
neighborhood of $\gamma'$.
Hypothesis \ref{hyp: Spi 1} gives us
a semisimple element \(\gamma\) and
a $\GF$-conjugation invariant subset
$\mcU = \mcU_{\gamma} \subset \GF$
containing \(\gamma'\).
We will show in Remark \ref{rmk: U open} that $\mcU$ is open
and, once we have imposed Hypothesis \ref{hyp: Spi 2}, 
further in Subsection \ref{subsec: E vanishes} that
\(E_{\Kclass}\) vanishes on \(\mcU\),
hence around \(\gamma'\).
Set $\H = \mathrm C_\G(\gamma)$, and write $\H^{\circ}$
for the identity component of $\H$.
We slightly abuse notation by writing \(\HF^\circ\) for \(\H^\circ(F)\).

\begin{remark}
\label{rmk: gamma shallow}
We show that $\gamma$ does not belong to $\GF_r$.
Suppose it does, and let \(\T\) be a maximal torus of \(\G\) (necessarily split over a tame extension of \(F\), by Hypothesis \ref{hyp: Kim Spi}\ref{hyp: Kim 1.3.6}) containing \(\gamma\).
Then we have that \(\gamma\) (which is central in \(\H\)) lies in \(\TF\), hence in \(\H^{\circ}\), hence in the center \(\Z(\H^{\circ})\)
of $\H^{\circ}$.
By Corollary \ref{cor: H domain}, we have that \(\gamma\) lies in \(\HF_r\).
By \cite[Corollary 3.14]{AS08}, it follows that $\mcU = {^{\GF}(\gamma \HF_r)}$ is contained
in $\GF_r$, hence that \(\gamma' \in \mcU\) belongs to \(\GF_r\),
which is contrary to our assumption.
\end{remark}

In \cite[Theorem 4.4.11]{Spi18},
it is shown that, under some hypotheses, there is a Kim--Murnaghan-type
asymptotic expansion for the Harish-Chandra character
$\Theta_{\pi}$ of $\pi$ about $\gamma$
that is valid on $\mcU$.

Before reviewing this expansion, we make some informal remarks
to help the reader think of the relationship
between $\gamma'$ and $\gamma$.
These notions will be easier to relate to if one is
familiar with the notion of singular depth
from \cite[Definition 4.1]{AK07} and the relevance of this notion to the main
`range of validity' result of that paper,
\cite[Corollary 12.10]{AK07}. The reason
for considering $\gamma$ is that the singular
depth of the semisimple part of $\gamma'$ may be strictly bigger than $r$,
yielding a range of validity that is not
large enough for our purposes.
(Remember that \emph{larger} real numbers parameterize
\emph{smaller} sets in the Moy--Prasad filtration.)
To explain the relation
between $\gamma'$ and $\gamma$, recall that
in `good situations' we have a decomposition
$\gamma' = \gamma'_{< r} \gamma'_{\geq r}$ of
\(\gamma'\)
into a product of commuting elements (see \cite[Definition 6.8 ff.]{AS08}),
where
\(\gamma'_{< r}\) is semisimple with singular depth
less than $r$,
and $\gamma'_{\geq r}$ belongs to
$\mathrm C_\G(\gamma'_{< r})^\circ(F)_r$.
Then \(\gamma\) can be taken to be $\gamma'_{< r}$.

Let $T$ be an invariant distribution on
$\mcU = {^{\GF}(\gamma \HF_r)}$ (i.e., a linear
functional $C_c^{\infty}(\mcU) \to \CC$
that is invariant under precomposition with
$\GF$-conjugation).
Then the usual process of semisimple descent
\cite[Definition 7.3]{AK07}
gives an $\Ad \HF^{\circ}$-invariant distribution on
$\gamma \HF_r'$, in the notation of \cite[page 387]{AK07}.

\begin{lm}
\label{lm: submersive enough}
The set \(\HF_r'\) of \cite[page 387]{AK07} equals \(\HF_r\).
\end{lm}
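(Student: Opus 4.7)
The plan is to identify $\HF_r'$, as defined in [AK07, page 387], with the intersection $\HF^\circ \cap \GF_r$, and then invoke Corollary \ref{cor: H domain}.

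First I would unwind the AK07 definition: in the semisimple-descent setup at $\gamma$, the set $\HF_r'$ consists of those $h \in \HF^\circ$ such that $h \in \GF_{x, r}$ for some $x \in \mcB(\H^\circ)$, where $\mcB(\H^\circ)$ is regarded as a subset of $\mcB(\G)$ via the canonical embeddings of Subsection \ref{subsec: mcB embeddings}. Taking the union over such $x$, this reads precisely as $\HF_r' = \HF^\circ \cap \GF_r$.

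Next, since $\gamma$ is semisimple, $\H^\circ$ is a connected reductive subgroup of $\G$, and any maximal torus of $\G$ containing $\gamma$ lies in $\H^\circ$; by Hypothesis \ref{hyp: Kim Spi}\ref{hyp: Kim 1.3.6} every such maximal torus splits over a tame extension. Thus Corollary \ref{cor: H domain}, applied with $\H^\circ$ in place of $\H$, gives $\HF^\circ \cap \GF_r = \HF^\circ_r$ for the positive real number $r$. Finally, at positive depth the Moy--Prasad filtration of the possibly disconnected group $\HF$ is internal to the identity component: the building $\mcB(\H)$ coincides with $\mcB(\H^\circ)$, and $\HF_{x, r} = \HF^\circ_{x, r}$ for every $x \in \mcB(\H^\circ)$ and every $r > 0$, so $\HF^\circ_r = \HF_r$, completing the identification.

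The main obstacle, if any, is the definitional matching in the first step, which is a routine unwinding of AK07's conventions; once it is in hand, the remaining identifications are immediate from Corollary \ref{cor: H domain} and the standard construction of Moy--Prasad filtrations.
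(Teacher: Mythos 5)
There is a genuine gap here: you have misidentified the definition of \(\HF_r'\). On page 387 of \cite{AK07}, the prime does not denote a union of Moy--Prasad subgroups over points of \(\mcB(\H^\circ)\); rather, \(\HF_r'\) is defined as the set of \(h \in \HF_r\) such that \(\Ad(\gamma h) - 1\) is invertible on \(\gF/\hF\). This is the locus on which the orbit map \((g, h) \mapsto g(\gamma h)g^{-1}\) is submersive, which is exactly what makes semisimple descent work, and it is the reason the lemma is needed at all (hence its label). Your proposal reduces the statement to \(\HF^\circ \cap \GF_r = \HF_r\), which is Corollary \ref{cor: H domain} and is not the content of this lemma; the actual content is that the invertibility condition is \emph{automatic} on all of \(\HF_r\).

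The essential input you are missing is Hypothesis \ref{hyp: Spi 1}\ref{hyp: good}: every eigenvalue \(\lambda\) of \(\Ad\gamma\) on \((\gF/\hF)\otimes_F \bar F\) satisfies \(\val(\lambda - 1) < r\). The argument then runs: for \(h \in \HF_r\) one shows that every eigenvalue \(\mu\) of \(\Ad h\) on that space satisfies \(\val(\mu - 1) \ge r\) --- first reducing to \(h\) semisimple via \cite[Lemma 3.7.18]{AD02}, then placing \(h\) in a maximal torus \(\T\) and using Corollary \ref{cor: H domain} to get \(h \in \TF_r\), so that all root values \(\alpha(h) - 1\) have valuation at least \(r\). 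Comparing valuations, no product \(\lambda\mu\) can equal \(1\), so \(\Ad(\gamma h) - 1\) is invertible on \(\gF/\hF\). Without this eigenvalue comparison the lemma is not proved; your appeal to Corollary \ref{cor: H domain} enters only as the auxiliary step \(\TF \cap \HF_r = \TF_r\), not as the main engine.
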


\begin{proof}
Let us temporarily write
\(V = (\gF/\hF) \otimes_F \bar F\).
Recall that, by definition,
$\HF_r'$ is the set of $h \in \HF_r$ such that $\Ad(\gamma h) - 1$
is invertible on $\gF/\hF$, or, equivalently, on \(V\).
It suffices to show that, for every \(h \in \HF_r\),
no eigenvalue \(\mu\) of \(\Ad(h)\) on \(V\) equals an eigenvalue of \(\Ad(\gamma)\) there.
By Hypothesis \ref{hyp: Spi 1}\ref{hyp: good}, which says that the eigenvalues of \(\Ad(\gamma)\) on \(V\) all have valuation less than \(r\), it suffices to show that \(\mu - 1\) has valuation at least \(r\).
By \cite[Lemma 3.7.18]{AD02}, we have that the semisimple part \(h_{\text{ss}}\) of \(h\) belongs to \(\HF_r\).
Since the eigenvalues of \(\Ad(h)\) on \(V\)
are the same as those of its semisimple part \(\Ad(h_{\text{ss}})\),
we may, and do, assume that \(h\) is semisimple.
Since \(h\) lies in \(\HF_r \subset \HF^\circ\),
there is a maximal torus \(\T\) in \(\H\) containing \(h\),
so that Corollary \ref{cor: H domain} gives \(h \in \TF \cap \HF_r = \TF_r\).
In particular, for all absolute roots \(\alpha\) of \(\T\) in \(\Lie(\G)\) (not just \(\Lie(\H)\)), we have that \(\alpha(h) - 1\) has valuation at least \(r\).  Since each eigenvalue \(\mu\) of \(\Ad(h)\) is such a root value, we are done.
\end{proof}

\begin{remark}
\label{rmk: U open}
By the comment on submersivity just before
\cite[Theorem 7.1]{AK07}, and Lemma \ref{lm: submersive enough},
we have that
the map $\GF \times \HF_r \to
\GF$ given by $(g, h) \mapsto
g (\gamma h) g^{-1}$ is submersive
at each point,
and hence its
image, which is $\mcU = {^{\GF}(\gamma \HF_r)}$, is a
$\GF$-invariant open neighborhood of $\gamma$.
(Alternatively, we could use \cite[Lemma 3.2.11(iv)]{Spi18} to show that \(\mcU\) equals \(\bigcup_{x \in \mcB(\H)} {^{\GF}(\gamma\GF_{x, r})}\).)
Thus,
we have shown that $\mcU$ is open.  It is on this
neighborhood of $\gamma'$
that we will
show $E_{\Kclass}$ to vanish in Subsection \ref{subsec: E vanishes}.
\end{remark}

\begin{notn}
\label{notn: Tgamma}
Given any invariant distribution $T$ on $\mcU$, we
will denote by $T_{\gamma}$ its semisimple descent
to $\gamma \HF_r$
(see \cite[Definition 7.3]{AK07} and Lemma \ref{lm: submersive enough}).
\end{notn}

We remark that the process of semisimple
descent involves certain
choices of measures, which we make arbitrarily
and fix for the rest of this section. Up to a scalar whose value
is of no concern to us (and that could be absorbed into the choices of measures if desired), this is also the distribution denoted
$T_{\gamma}$ in \cite[Lemma 4.4.3]{Spi18}
(see the proof of that lemma and the comment just before it).

\begin{remark} \label{rmk: Tgamma determines T}
The distribution $T_{\gamma}$ determines $T$, thanks to
the surjectivity assertion of \cite[Theorem 7.1]{AK07}.
\end{remark}

Following the notation of \cite[page 2311]{Spi18},
write $\mcO^{\HF^{\circ}}({^{\GF}\Gamma})$ for the set of
$\HF^{\circ}$-orbits in $\hF$ whose closure
intersects the \(\GF\)-orbit ${^{\GF}\Gamma}$ of \(\Gamma\).
In other words, these
are the $\Ad \HF^{\circ}$-orbits of elements in $\hF$ whose
semisimple parts are $\GF$-conjugate to $\Gamma$.

\begin{remark}
\label{rmk: orbits finite}
We claim that $\mcO^{\HF^{\circ}}({^{\GF}\Gamma})$ is
finite.
Since \(F\) has characteristic \(0\), so that every connected, reductive group has finitely many rational orbits of nilpotent elements, it suffices to show that there are finitely many \(\HF^\circ\)-orbits in \({^{\GF}\Gamma} \cap \hF\).
This is \cite[Lemma 4.4.10]{Spi18},
but, to avoid assuming the hypotheses of \cite{Spi18},
we outline a proof in a similar spirit.
The proof simplifies because, by \cite[Chapter III, Example 4.2(d) and Theorem 4.4.5]{Ser97}, every \(\H^\circ(\bar F)\)-orbit in \(\Lie(\H)(\bar F)\) intersects \(\hF\) in finitely many \(\HF^\circ\)-orbits.

It thus suffices to show that there are
only finitely many $\H^{\circ}(\bar F)$-orbits in
$\Ad(\G(\bar F))(\Gamma) \cap \Lie(\H)(\bar F)$.
Fix a maximal torus \(\T\) in \(\H\).
Then every \(\H^\circ(\bar F)\)-orbit of semisimple elements in \(\Lie(\H)(\bar F)\) intersects \(\Lie(\T)(\bar F)\), so it suffices to show that
$\Ad(\G(\bar F))(\Gamma) \cap \Lie(\T)(\bar F)$ is
finite.
Since \(\T\) is also a maximal torus in \(\G\),
this last set is contained in the orbit of \(\Gamma\) under the Weyl group of \(\T\) in \(\G\), hence is finite.
\end{remark}

For each $\mcO \in \mcO^{\HF^\circ}({^{\GF}\Gamma})$, the choice of
an $\HF^{\circ}$-invariant measure on it gives us a distribution
$\nu_{\mcO}$ on $\hF$ supported on $\mcO$
\cite[Theorem 1]{Rao72},
and thus its Fourier transform $\hat \nu_{\mcO}$,
as in Subsection \ref{subsec: Fourier transform definition}.
Recall from Corollary \ref{cor: H domain} that $\log : \GF_r
\to \gF_r$ restricts to a homeomorphism $\HF_r \to \hF_r$
(which we also call $\log$).

\begin{df} \label{df: Gamma asymptotic expansion}
Let $T$ be an invariant distribution on $\mcU$.
Let $\theta_{T_{\gamma}}$ be the distribution on $\hF_r$ obtained
by pushing forward the semisimple descent $T_{\gamma}$ of $T$ from
$\gamma \HF_r$ to $\hF_r$ via $\gamma h \mapsto \log h$.
We say that $T$ has a $\Gamma$-asymptotic expansion
about \(\gamma\) if there exists
a tuple $(b_{\mcO})_{\mcO \in \mcO^{\HF^\circ}({^{\GF}\Gamma})}$
of complex numbers indexed by the elements of
$\mcO^{\HF^\circ}({^{\GF}\Gamma})$, such that
we have the following equality of distributions on
$\hF_r$:
\[
\theta_{T_{\gamma}} = \sum_{\mcO \in \mcO^{\HF^\circ}({^{\GF}\Gamma})}
b_{\mcO} \hat \nu_{\mcO} \res_{\hF_r}.
\]
In such a situation, the distribution
$\sum_{\mcO \in \mcO^{\HF^\circ}({^{\GF}\Gamma})}
b_{\mcO} \hat \nu_{\mcO}$ on $\hF$
will be referred to as a $\Gamma$-asymptotic expansion of
$T$ about $\gamma$ on $\mcU$.
\end{df}

\subsection{Hypotheses guaranteeing asymptotic expansions}
\label{subsec: hyp Spi}

Note that Hypothesis \ref{hyp: Spi 2} below involves
not only $\G$ but also our fixed weak associate class
$\Kclass$ (introduced in Subsection \ref{subsec: projector})
and the element $\Gamma \in \gF_{\Kclass^{\vee}}
= - \gF_{\Kclass}$ (introduced at the beginning of Section \ref{sec: A proof of thm:main(i)}).

\begin{hyp} \label{hyp: Spi 2}
Every element \(\gamma\) of the collection of semisimple
elements from Hypothesis \ref{hyp: Spi 1}
satisfies the following properties.
Let $\mcU = \mcU_{\gamma}$
be as in Hypothesis \ref{hyp: Spi 1}\ref{hyp: cover}.
\begin{enumerate}[label=(\alph*)]
\item\label{hyp: Spi thm 4.4.11} For every \(\pi\) in \(\tlG_{\Kclass}\),
the Harish-Chandra character $\Theta_{\pi^{\vee}}\res_{\mcU}$ has a $\Gamma$-asymptotic
character expansion about $\gamma$
in the sense of Definition \ref{df: Gamma asymptotic expansion}.

\item\label{hyp: Spi lem 4.4.14} Suppose that we have a distribution
\[ \theta = \sum_{\mcO \in \mcO^{\HF^\circ}({^{\GF}\Gamma})}
b_\mcO \hat \nu_{\mcO} \]
on $\hF$. Then $\theta = 0$ on \(\hF_r\)
if and only if
$\theta(\Lambda_{- X}\res_{\hF_{x, r}}) = 0$
for all $x$, $X \in \hF_{x, -r}$ and $g \in \GF$ such that
\begin{itemize}
\item $\Ad(g^{-1}) \Gamma \in \hF$;
\item $x \in \mcB(\H_g')$, where
$\H_g' \ceq \H \cap g^{-1} \G' g$; and
\item $X \in \Ad(g^{-1})(\Gamma + \gF'_{(-r)+})$.
\end{itemize}
Here, $\Lambda_{-X}\res_{\hF_{x, r}}$ is viewed as an element
of $C_c^{\infty}(\hF)$ that is zero outside $\hF_{x, r}$.
The element $X$ is denoted $X^*$ in \cite[page 2368]{Spi18}.
We have also used that $\H_g'$, being the centralizer in $\H$ of
the semisimple element $\Ad(g^{-1}) \Gamma$ of $\hF$, contains
a maximal torus of $\H$ and hence of $\G$,
which Hypothesis \ref{hyp: Kim Spi}\ref{hyp: Kim 1.3.6} shows is tame,
so that Subsection \ref{subsec: mcB embeddings} applies to
let us view $\mcB(\H_g')$ as a subset of $\mcB(\G)$.
\item\label{hyp: Spi lem 4.4.4} Whenever $x \in \mcB(\H)$ and
$X \in \hF_{x, -r}$,
we have that $\Lambda_{-X, x, r}\res_{\HF_{x, r}}$ and
$\Lambda_{-X, x, r}$ are `related by semisimple descent',
by which we mean that for all invariant distributions
$T$ on $\mcU$ with semisimple descent $T_{\gamma}$
onto $\gamma \HF_r$, we have:
\[
\vol_\GF(\GF_{x, r})^{-1}
T(\ell_{\gamma}(\Lambda_{-X, x, r}))
= \vol_\HF(\HF_{x, r})^{-1}
T_{\gamma}(\ell_{\gamma}(\Lambda_{-X, x, r}\res_{\HF_{x, r}})), \]
where \(\ell\) is the left regular representation.
We have written
\begin{itemize}
\item $\ell_{\gamma}(\Lambda_{-X, x, r})$ for the element
of $C_c^{\infty}(\GF)$ that is
supported on $\gamma \GF_{x, r}$, and given on
it by $\gamma g \mapsto \Lambda_{-X, x, r}(g)$;
and
\item $\ell_{\gamma}(\Lambda_{-X, x, r}\res_{\HF_{x, r}})$
for the element of $C_c^{\infty}(\HF)$ that is
supported on $\gamma \HF_{x, r}$, and given on it by
$\gamma h \mapsto \Lambda_{-X, x, r}(h)$.
\end{itemize}
\end{enumerate}
\end{hyp}

\begin{remark}
\label{rmk: Spi 2 in Spi18}
Hypothesis \ref{hyp: Spi 2} consists of statements that follow from \cite{Spi18}.  The reason that we do not cite the results of that paper directly is that we do not wish to recapitulate the lengthy list of hypotheses on which it depends.
(For the interested reader, these are
\cite[Hypotheses
	3.2.2,
	3.2.8,
	4.1.5,
	4.3.4,
	4.4.2,
	5.1.6%
]{Spi18}, all of which must hold for all the elements to which we apply them.)
\begin{itemize}
\item If \(\pi\) belongs to \(\tlG_{\Kclass}\), then \(\pi^\vee\) belongs to \(\tlG_{\Kclass^\vee}\), hence contains a good unrefined minimal \(K\)-type that belongs to \(\Kclass^\vee\).
Upon applying \cite[Lemma 1.4.2]{Kim04} to the dual blob of this \(K\)-type,
we see that $\pi^{\vee}$ contains a good unrefined
minimal $K$-type
whose dual blob $\Gamma' + \gF_{x', (-r)+}$
is contained in $\Gamma + \gF_{y, (-r)+}$
for some $y \in \mcB(\G')$. First, this gives that
$\gF_{x', (-r)+} \subset \gF_{y, (-r)+}$, hence, by taking duals,
that \(\gF_{x', r}\) is contained in \(\gF_{y, r}\);
so $\GF_{y, r} = \exp(\gF_{y, r})
\subset \exp(\gF_{x', r}) = \GF_{x', r}$. Moreover,
the containment
$\Gamma' + \gF_{x', (-r)+} \subset \Gamma + \gF_{y, (-r)+}$
of dual blobs also gives us that
$\Lambda_{\Gamma', x', r}\res_{\GF_{y, r}} = \Lambda_{\Gamma, y, r}$,
so that $\pi$ contains the good unrefined
minimal $K$-type $(\GF_{y, r}, \Lambda_{\Gamma, y, r})$.
We now see that Hypothesis \ref{hyp: Spi 2}\ref{hyp: Spi thm 4.4.11}
is a consequence of \cite[Theorem 4.4.11]{Spi18}
(with \(\Gamma\) in place of \(Z^*_o\)).
\item Hypothesis \ref{hyp: Spi 2}\ref{hyp: Spi lem 4.4.14} is a weaker statement than the one in \cite[Lemma 4.4.14]{Spi18}.
To see this, use that the
set $\mcO^{\HF^{\circ}}(\mcU^*)$ from that lemma is, by definition,
the set of $\HF^{\circ}$-orbits in
${^{\GF}(\Gamma + \gF'_{(-r)+})} \cap \hF$, and hence contains
the set $\mcO^{\HF^\circ}({^{\GF} \Gamma})$.
\item Hypothesis \ref{hyp: Spi 2}\ref{hyp: Spi lem 4.4.4} is a consequence of \cite[Lemma 4.4.4]{Spi18},
since the facts that \(x \in \mcB(\H)\) and \(X \in \hF\)
mean that the assignment $\phi \mapsto \hat \phi$
of that corollary takes $\Lambda_{X, x, r}\res_{\HF_{x, r}}$ on $\HF_{x, r}$ to $\Lambda_{X, x, r}$ on $\GF_{x, r}$.
\end{itemize}
\end{remark}

\subsection{$E_{\Kclass}$ has $\Gamma$-asymptotic
expansions}

Remember that we fixed an element \(\gamma' \in \GF \setminus \GF_r\) in Subsection \ref{subsec: asymptotic review}, around which we wanted to show that \(E_{\Kclass}\) vanished.  As preparation for this, we chose an element \(\gamma\) in the collection given by Hypothesis \ref{hyp: Spi 1}
(which also satisfies Hypothesis \ref{hyp: Spi 2})
such that the set \(\mcU = \mcU_\gamma\)
of Hypothesis \ref{hyp: Spi 1}\ref{hyp: cover}
---
which, by Remark \ref{rmk: U open}, is open
---
contains \(\gamma'\).
Remember that we put \(\H = \mathrm C_\G(\gamma)\).

\begin{lm} \label{lm: EmcS expansion}
$E_{\Kclass}\res_{\mcU}$ has a $\Gamma$-asymptotic
expansion about $\gamma$.
\end{lm}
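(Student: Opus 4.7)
The plan is to combine the Plancherel-type formula \eqref{eqn: 2.4.3 MT02} for elements of the Bernstein center with the \(\Gamma\)-asymptotic expansions of the individual tempered characters \(\Theta_{\pi^{\vee}}\), \(\pi \in \hatG_{\Kclass}\), supplied by Hypothesis \ref{hyp: Spi 2}\ref{hyp: Spi thm 4.4.11}.  Concretely, since \(E_{\Kclass}(\pi) = \charfn_{\hatG_{\Kclass}}(\pi)\), Equation \eqref{eqn: 2.4.3 MT02} gives, for every \(f \in C_c^{\infty}(\mcU)\),
\[
E_{\Kclass}(f) = \int_{\hatG_{\Kclass}} \Theta_{\pi^{\vee}}(f)\,d\pi.
\]

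Semisimple descent \(T \mapsto T_\gamma\) (Notation \ref{notn: Tgamma}) and push-forward via \(\gamma h \mapsto \log h\) (Definition \ref{df: Gamma asymptotic expansion}) are linear operations on invariant distributions.  Applying them formally to both sides of the displayed identity, and invoking the expansion \(\theta_{(\Theta_{\pi^{\vee}})_\gamma} = \sum_\mcO c_\mcO(\pi)\,\hat\nu_\mcO\res_{\hF_r}\) provided by Hypothesis \ref{hyp: Spi 2}\ref{hyp: Spi thm 4.4.11}, one obtains
\[
\theta_{(E_{\Kclass})_\gamma}
 = \int_{\hatG_{\Kclass}} \theta_{(\Theta_{\pi^{\vee}})_\gamma}\,d\pi
 = \sum_{\mcO \in \mcO^{\HF^{\circ}}({}^{\GF}\Gamma)}
	\left(\int_{\hatG_{\Kclass}} c_\mcO(\pi)\,d\pi\right)
	\hat\nu_\mcO\res_{\hF_r},
\]
where the interchange of the finite sum over \(\mcO\) with the integral over \(\pi\) is permitted because \(\mcO^{\HF^{\circ}}({}^{\GF}\Gamma)\) is finite (Remark \ref{rmk: orbits finite}).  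Setting \(b_\mcO \ceq \int_{\hatG_{\Kclass}} c_\mcO(\pi)\,d\pi\) then yields the desired \(\Gamma\)-asymptotic expansion.

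The main obstacle is rigorously justifying the commutation of the Plancherel integral with the composition of semisimple descent and push-forward, and ensuring the scalars \(b_\mcO\) are well-defined.  The cleanest way to handle this is to evaluate both sides on a fixed \(\phi \in C_c^{\infty}(\hF_r)\): from the definition of semisimple descent (\cite[Definition 7.3]{AK07} combined with Lemma \ref{lm: submersive enough}), \(\theta_{(E_{\Kclass})_\gamma}(\phi)\) equals \(E_{\Kclass}(\tilde\phi)\) for an explicit \(\tilde\phi \in C_c^{\infty}(\mcU)\) built from \(\phi\) by pull-back through \(\log\), translation by \(\gamma\), and an appropriate \(\GF\)-averaging; the Plancherel formula then expresses this as the convergent integral \(\int_{\hatG_{\Kclass}} \Theta_{\pi^{\vee}}(\tilde\phi)\,d\pi\), whose integrand is \(\sum_\mcO c_\mcO(\pi)\,\hat\nu_\mcO(\phi)\).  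Because the target span \(\operatorname{span}\{\hat\nu_\mcO\res_{\hF_r}\}\) is finite-dimensional, testing against a finite collection of \(\phi\)'s that separates this span determines the \(b_\mcO\) by finitely many convergent linear relations, which shows simultaneously that they are finite numbers and that the expansion holds.
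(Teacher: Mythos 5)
Your argument is correct, and it shares the paper's two essential inputs --- the Plancherel-type identity \eqref{eqn: 2.4.3 MT02}, which writes $E_{\Kclass}(f) = \int_{\hatG_{\Kclass}}\Theta_{\pi^{\vee}}(f)\,d\pi$ for $f \in C_c^{\infty}(\mcU)$, and the finite-dimensionality coming from Remark \ref{rmk: orbits finite} --- but it exploits the finite-dimensionality differently. The paper never interchanges the Plancherel integral with anything: it notes that the span $V$ of the restrictions $\Theta_{\pi^{\vee}}\res_{C_c^{\infty}(\mcU)}$, $\pi \in \hatG_{\Kclass}$, is finite dimensional (using Remarks \ref{rmk: Tgamma determines T} and \ref{rmk: orbits finite}), that $E_{\Kclass}\res_{C_c^{\infty}(\mcU)}$ and all these restrictions kill the common kernel $C_c^{\infty}(\mcU)_0$ of the characters, and that a finite-dimensional subspace of $(C_c^{\infty}(\mcU)/C_c^{\infty}(\mcU)_0)^*$ separating points must be the whole dual; hence $E_{\Kclass}\res_{\mcU}$ is a finite linear combination of distributions that each have an expansion, with no need to identify the coefficients. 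You instead descend first and compute $b_{\mcO}$ directly as $\int_{\hatG_{\Kclass}} c_{\mcO}(\pi)\,d\pi$, legitimizing the interchange by testing against functions dual to a basis of $\Span\set{\hat\nu_{\mcO}\res_{\hF_r}}{\mcO}$, so that the (basis) coefficients $c_{\mcO}(\pi)$ become explicit finite linear combinations of the Plancherel-integrable functions $\pi \mapsto \Theta_{\pi^{\vee}}(\tilde\phi_i)$. This is more explicit than the paper's route and bypasses Remark \ref{rmk: Tgamma determines T} entirely, at the cost of the integrability bookkeeping; the one point to spell out is the very last step --- knowing $\theta_{(E_{\Kclass})_\gamma}$ at the finitely many separating $\phi_i$ does not by itself give the expansion for all $\phi$; rather, for arbitrary $\phi$ you should rewrite the integrand $\theta_{(\Theta_{\pi^{\vee}})_\gamma}(\phi)$ in the chosen basis and pull the now-finite sum of integrable coefficient functions out of the integral. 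With that said, the mechanism you propose does carry this out, so the proof goes through.
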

\begin{remark}
A more `morally correct' way of proving this lemma might
be to mimick the proof of \cite[Theorem 4.4.11]{Spi18}.
However, the following proof, awkward as it may be,
has been chosen to spare the reader such
an effort.
\end{remark}
\begin{proof}[Proof of Lemma \ref{lm: EmcS expansion}]
For each $\pi \in \hatG_{\Kclass}$,
we have by Hypothesis \ref{hyp: Spi 2}\ref{hyp: Spi thm 4.4.11} that
$\Theta_{\pi^{\vee}}$ has a $\Gamma$-asymptotic
character expansion about $\gamma$ on $\mcU$.
Therefore it suffices to show that on $C_c^{\infty}(\mcU)$,
the distribution $E_{\Kclass}$, which, by Equation \eqref{eqn: 2.4.3 MT02},
is given by
\begin{equation} \label{eqn: EmcS}
E_{\Kclass}(f) = \int_{\hatG_{\Kclass}} \Theta_{\pi^{\vee}}(f) \, d\pi,
\end{equation}
is (noncanonically)
a finite complex linear combination of the
$\Theta_{\pi^{\vee}}\res_{C_c^{\infty}(\mcU)}$, as $\pi$ ranges
over $\hatG_{\Kclass}$.

Since $\mcO^{\HF^\circ}({^{\GF}\Gamma})$ is finite
(Remark \ref{rmk: orbits finite})
and a distribution is determined by its semisimple descent
(Remark \ref{rmk: Tgamma determines T}),
the space of distributions on $\mcU$ that have
a $\Gamma$-asymptotic expansion on $\mcU$ is finite dimensional.
Therefore, the complex linear span \(V\) of
$\set*
	{\Theta_{\pi^{\vee}}\res_{C_c^{\infty}(\mcU)}}
	{\pi \in \hatG_{\Kclass}}$
is a finite-dimensional vector space of distributions on $\mcU$.

Denote by $C_c^{\infty}(\mcU)_0$
the space of all $f \in C_c^\infty(\mcU)$ such that $\Theta_{\pi^{\vee}}(f) = 0$
for all $\pi \in \hatG_{\Kclass}$. Thus, for each $\pi \in \hatG_{\Kclass}$,
the restriction $\Theta_{\pi^{\vee}}\res_{C_c^{\infty}(\mcU)}$
lies in the subspace
$(C_c^{\infty}(\mcU)/C_c^{\infty}(\mcU)_0)^*$
of \(C_c^\infty(\mcU)^*\). So does
$E_{\Kclass}\res_{C_c^{\infty}(\mcU)}$, by Equation \eqref{eqn: EmcS}.
Thus,
it suffices to show that \(V\)
equals all of $(C_c^{\infty}(\mcU)/C_c^{\infty}(\mcU)_0)^*$.
Since we have shown that \(V\) is finite dimensional, it suffices to show
that it separates points on $C_c^{\infty}(\mcU)/C_c^{\infty}(\mcU)_0$.
But this follows from the definition of $C_c^{\infty}(\mcU)_0$.
\end{proof}

\subsection{Vanishing of $E_{\Kclass}$ on a domain}
\label{subsec: E vanishes}

Now we prove Theorem \ref{thm: main}\ref{thm: E support}.
We continue to assume Hypotheses \ref{hyp: Spi 1} and \ref{hyp: Spi 2}.
Recall that $\G'$ is the centralizer in \(\G\) of the element $\Gamma$
introduced at the beginning of Section \ref{sec: A proof of thm:main(i)}.

\begin{proof}[Proof of Theorem \ref{thm: main}\ref{thm: E support}]
In Subsection \ref{subsec: asymptotic review}, we picked out a semisimple element \(\gamma \in \GF \setminus \GF_r\), and showed that it would suffice to show that \(E_{\mathfrak s}\) vanishes on the set \(\mcU = \mcU_\gamma\) defined there.

By Lemma \ref{lm: EmcS expansion}, $E_{\Kclass}$ has
a $\Gamma$-asymptotic expansion on $\mcU$
(Definition \ref{df: Gamma asymptotic expansion}).
Namely, write \(E_{\Kclass, \gamma}\) for the semisimple descent of \(E_{\Kclass}\) to \(\gamma\HF_r\)
(Notation \ref{notn: Tgamma}),
and
$\theta$ for its push-forward to \(\hF_r\)
via $\gamma h \mapsto \gamma \log h$.
Then \(\theta\) is given by an expression
$\sum_{\mcO \in \mcO^{\HF^\circ}({^{\GF}\Gamma})} b_{\mcO} \hat \nu_{\mcO}$.
It is enough to show that $\theta = 0$ (on \(\hF_r\)).

Now we apply
Hypothesis \ref{hyp: Spi 2}\ref{hyp: Spi lem 4.4.14},
which says that, to check the desired equality, it is enough
to check that $\theta(\Lambda_{- X}\res_{\hF_{x, r}}) = 0$ for all
$x$, $X$ and $g$ as in that hypothesis.
We recall the notations \(\ell_\gamma(\Lambda_{-X, x, r}\res_{\HF_{x, r}})\) and \(\ell_\gamma(\Lambda_{-X, x, r})\) from Hypothesis \ref{hyp: Spi 2}\ref{hyp: Spi lem 4.4.4},
noting specifically that we regard them as functions on \(\HF\) and \(\hF\) by extension by \(0\).
By the definitions of \(\theta\) and $E_{\Kclass, \gamma}$,
we have
$\theta(\Lambda_{-X}\res_{\hF_{x, r}}) =
E_{\Kclass, \gamma}(\ell_{\gamma}(\Lambda_{-X, x, r}\res_{\HF_{x, r}}))$.

By using Hypothesis \ref{hyp: Spi 2}\ref{hyp: Spi lem 4.4.4}
(in the second of the two equalities below), we get:
\[
\vol_\HF(\HF_{x, r})^{-1}\theta(\Lambda_{-X}\res_{\hF_{x, r}})
= \vol_\HF(\HF_{x, r})^{-1}E_{\Kclass, \gamma}(\ell_{\gamma}(\Lambda_{-X, x, r}\res_{\HF_{x, r}}))
= \vol_\GF(\GF_{x, r})^{-1}E_{\Kclass}(\ell_{\gamma}(\Lambda_{-X, x, r})). \]

Thus, it is enough to show that $E_{\Kclass}(\ell_{\gamma}(\Lambda_{-X, x, r})) = 0$.
Now, recalling that $z(f) = (z * f^{\vee})(1)$ for
$z \in \mcZ(\G)$ and $f \in C_c^{\infty}(\GF)$, where
$f^{\vee} \in C_c^{\infty}(\GF)$ is given
by $g \mapsto f(g^{-1})$, we have:
\[
E_{\Kclass}(\ell_{\gamma}(\Lambda_{-X, x, r}))
= (E_{\Kclass} * \rho_{\gamma}(\Lambda_{X, x, r}))(1),
\]
where $\rho_{\gamma}(\Lambda_{X, x, r})$
is the element of $C_c^{\infty}(\GF)$ that is supported on
$\GF_{x, r} \gamma^{-1}$, and given on it by
$g \gamma^{-1} \mapsto \Lambda_{X, x, r}(g)$.
Since \(\gamma\) does not lie in \(\GF_{x, r}\), which is where \(\Lambda_{X, x, r}\) is supported, we have that \(\rho_\gamma(\Lambda_{X, x, r})\) vanishes at \(1\).  It thus remains to show that
$E_{\Kclass} * \rho_{\gamma}(\Lambda_{X, x, r})
= \rho_{\gamma}(\Lambda_{X, x, r})$.
For this, it suffices to show that $E_{\Kclass} * \Lambda_{X, x, r}
= \Lambda_{X, x, r}$
(because $f \mapsto z * f$ is $\GF \times \GF$-equivariant),
which in turn, by the same $\GF \times \GF$-equivariance,
follows if we show
that $E_{\Kclass} * \Lambda_{\Ad g(X), g \cdot x, r}
= \Lambda_{\Ad g(X), g \cdot x, r}$.
We claim that this last equality
follows from Lemma \ref{lm: non good types}.
To see this, note
(using Subsection \ref{subsec: mcB embeddings})
that, since \(x \in \mcB(\H'_g) \subseteq \mcB(g^{-1}\G'g)\)
we have that \(g\cdot x \in \mcB(\G')\),
and
\[ \Ad g(X) \in
\gF_{g \cdot x, -r} \cap (\Gamma + \gF'_{(-r)+})
= (\gF_{g \cdot x, -r} \cap \gF')
\cap (\Gamma + \gF'_{(-r)+})
= \gF'_{g \cdot x, -r} \cap (\Gamma + \gF'_{(-r)+}), \]
where the second equality follows from
Corollary \ref{cor: H domain}.
We have that $\Gamma$ is centralized by \(\GF'\), so belongs to the Lie algebra of every maximal torus in \(\G'\).
In particular, we may choose a maximal torus \(\T\) in \(\G'\) (even maximally split in \(\G'\)) such that \(g \cdot x\) belongs to \(\mcB(\T)\).
Since \(\Gamma\) is of depth \(-r\) in \(\G\), we have by Corollary \ref{cor: H domain} again that it belongs to \(\tF \cap \gF_{-r} = \tF_{-r} \subset \gF'_{g \cdot x, -r}\).
Thus we can apply Lemma \ref{lm: non good types} with
$g \cdot x$ being the $x$ of that lemma and with
$\Ad g(X) - \Gamma \in \gF'_{g \cdot x, -r} \cap \gF'_{(-r)+}$
being the \(X'\) of that lemma.
\end{proof}
\bibliographystyle{alpha}
\bibliography{bib}

\end{document}